\pdfoutput=1
\documentclass[11pt]{amsart}

\usepackage[utf8]{inputenc}
\usepackage[T1]{fontenc}
\usepackage[text={460pt,665pt}, centering, a4paper, marginparwidth=2cm]{geometry}
\usepackage[hidelinks,pdfusetitle]{hyperref}
\usepackage[activate={true,nocompatibility},final,tracking=true,kerning=true,factor=1100,stretch=10,shrink=10]{microtype}
\usepackage{braket}    
\usepackage{amsthm}    
\usepackage{amssymb}   
\usepackage{mathrsfs}  
\usepackage[shortlabels]{enumitem}
\usepackage{caption}
\usepackage[margin=5pt,justification=centering,labelformat=simple,labelfont=sc]{subcaption}
\usepackage{graphicx}

\usepackage{bbold}


\theoremstyle{plain}
\newtheorem{theorem}{Theorem}[section]
\newtheorem{corollary}[theorem]{Corollary}
\newtheorem{proposition}[theorem]{Proposition}
\newtheorem{lemma}[theorem]{Lemma}

\newtheorem{definition}[theorem]{Definition}

\theoremstyle{remark}
\newtheorem{remark}[theorem]{Remark}
\newtheorem{example}[theorem]{Example}


\DeclareMathOperator\spec{Spec}               
\DeclareMathOperator\mult{m}               


\newcommand{\cb}{\ensuremath{\mathscr{B}}}

\newcommand{\cf}{\ensuremath{\mathscr{F}}}

\newcommand{\ck}{\ensuremath{\mathscr{K}}}
\newcommand{\ind}[1]{\mathbb{1}_{#1}}
\newcommand{\un}{\mathbb{1}}
\newcommand{\zero}{\mathbb{0}}
\newcommand{\ca}{\ensuremath{\mathscr{A}}}

\usepackage[backend=biber,
	    url=false,
	    doi=false,
	    isbn=false,
	    date=year,
	    maxbibnames=99,
	    giveninits,
	    sortcites=true,
	    sorting=nty,
	    style=numeric]{biblatex}

\renewbibmacro{in:}{}
            
\newcommand{\abs}[1]{\left\lvert\,#1\,\right\rvert}
\newcommand{\norm}[1]{\left\lVert\,#1\,\right\rVert}

\newcommand{\R}{\ensuremath{\mathbb{R}}}

\newcommand{\C}{\ensuremath{\mathbb{C}}}
\newcommand{\N}{\ensuremath{\mathbb{N}}}

\newcommand{\Diag}{\ensuremath{\mathrm{Diag}}}


\newcommand{\kk}{\ensuremath{\mathrm{k}}}
\newcommand{\rd}{\ensuremath{\mathrm{d}}}
\newcommand{\cll}{\ensuremath{\mathcal{L}}}

\newcommand{\as}{\text{ a.s.}}
\newcommand{\lb}{[\![}

\newcommand{\oa}{\ensuremath{\Omega_\mathrm{a}}}
\newcommand{\oi}{\ensuremath{\Omega_\mathrm{i}}}

\newcommand{\pos}{\ensuremath{\mathrm{p}}}
\newcommand{\nega}{\ensuremath{\mathrm{n}}}

\date{\today}

\author{Jean-François Delmas}
\address{Jean-François Delmas,
  CERMICS, \'{E}cole des Ponts, France}
\email{jean-francois.delmas@enpc.fr}

\author{Dylan Dronnier}
\address{Dylan Dronnier,
  Université de Neuchâtel, Switzerland}
\email{dylan.dronnier@unine.ch}

\author{Pierre-André Zitt}
\address{Pierre-André Zitt, LAMA, Université Gustave Eiffel, France}
\email{pierre-andre.zitt@univ-eiffel.fr}

\newcommand{\ci}{\mathcal{I}}
\newcommand{\Ta}{\ensuremath{T_\mathrm{a}}}
\newcommand{\ka}{\ensuremath{\kk_\mathrm{a}}}
\newcommand{\cfi}{\ensuremath{\cf_\mathrm{inv}}}

\setcounter{totalnumber}{1}

\title[Effective reproduction number]{The effective reproduction number: 
convexity, concavity and invariance}

\begin{document}

\thanks{This work is partially supported by Labex B\'ezout reference ANR-10-LABX-58}

\subjclass[2010]{92D30, 47B34, 15A42, 26B25}

\keywords{Integral operator, vaccination strategy, effective reproduction number,
convexity, concavity}

\begin{abstract} 
  Motivated by the question of  optimal vaccine  allocation strategies in heterogeneous  
  population for   epidemic models, we study  various properties of the  \emph{effective
  reproduction number}. In the simplest case, given a fixed, non-negative matrix $K$, this
  corresponds mathematically to the study of the spectral radius $R_e(\eta)$ of the matrix
  product $\mathrm{Diag}(\eta)K$, as a function of $\eta\in\R_+^n$. The matrix $K$ and the
  vector $\eta$ can be interpreted as a next-generation operator and a vaccination
  strategy.  This can be generalized in an infinite dimensional case where the matrix $K$
  is replaced by a positive integral compact operator, which  is composed  with a
  multiplication by a non-negative function $\eta$.
  
  We  give sufficient conditions  for the function~$R_e$  to be  convex or  a concave.  
  Eventually, we  provide equivalence properties on models  which ensure that the
  function~$R_e$ is unchanged.
\end{abstract}
  
\maketitle

\section{Introduction}

\subsection{The mathematical question}
\label{sec:intro-Q}

For $p\in [1, +\infty ]$, we consider the Lebesgue space $L^p$, with its usual norm   
$\norm{\cdot}_p$, on a $\sigma$-finite measure space $(\Omega, \mathscr{F},  \mu)$. We 
denote by  $\norm{\cdot}_{L^p}$ the  operator norm  on the Banach space  of bounded
operators from $L^p$  to $L^p$. For a bounded operator $T$ on $L^p$, we denote by
$\rho(T)=\lim_{n\rightarrow \infty }\norm{T^n}_{L^p}^{1/n}$ its spectral radius.   We 
recall that  an  operator  $T$  on  $L^p$ is  positive  if $T(L^p_+)\subset L^p_+$,  where
$L^p_+$ denotes the set of non-negative functions in $L^p$.  For $h\in L^\infty _+$, let
$M_h: f \mapsto hf$ denote the bounded operator on $L^p$.

According to the Krein-Rutman theorem, if $T$ is a
positive compact operator on $L^p$ such that $\rho(T)$ is positive, then $\rho(T)$ is also
an eigenvalue.  For such an operator  we define the map $R_e[T]$ on $ L^\infty_+$ by:
\begin{equation}\label{eq:def-ReTh}
  R_e[T](h)=\rho (T \, M_h).
\end{equation}
By homogeneity of the spectral radius, the study of the map $R_e[T]$, it is enough to consider
this map only on the subset $\Delta\subset L^\infty _+$ of non-negative measurable  
functions bounded by 1.  Our aim is to provide sufficient conditions on $T$ for the map
$R_e[T]$ to be convex or concave on $\Delta$. We briefly explain in the next section how
this question is related to the optimal vaccination problem in epidemic models. 

\subsection{The epidemic motivation}

In finite metapopulation  models,  the  population is  divided  into
$N  \geq 2$  different sub-populations; this  amounts to considering the  discrete state
space  $\Omega_{\mathrm{d}} =  \{ 1,  \ldots,  N \}$. 
Following \cite{hill-longini-2003}, the entry $K_{ij}$ of the so-called next-generation
matrix $K$ is equal to the expected number of secondary infections for people in subgroup
$i$ resulting from a single randomly selected non-vaccinated infectious person in subgroup
$j$. The matrix $K$ has non-negative entries, and represents the compact positive
operator $T$.  Let  $\eta\in \Delta=[0, 1]^N$ represent a vaccination strategy, that is, $\eta_i$ is the
fraction of non-vaccinated individuals in the $i$\textsuperscript{th}
sub-population; thus $\eta_i = 0$ when the $i$\textsuperscript{th} sub-population
is fully vaccinated, and $1$ when it is not vaccinated at all --- this seemingly unnatural
convention is in particular motivated by the simple form of
Equation~\eqref{eq:def-ReTh}. So, the strategy $\un\in \Delta$, with all its entries
equal to 1, corresponds to an entirely non-vaccinated population.

The \emph{effective reproduction number}~$R_e[K](\eta)$ associated to the vaccination
strategy~$\eta$ is  then the spectral radius  of the matrix    $ K\cdot  \Diag(\eta)$:
\begin{equation}
  \label{eq:informalPb}
  R_e[K](\eta)=\rho(  K\cdot  \Diag(\eta)),
\end{equation}
where $\Diag(\eta)$ is the diagonal matrix with diagonal entries $\eta$. It may  be
interpreted as the mean number of infections coming  from a typical case in the  SIS model
(where ``S'' and ``I'' stand for susceptible  and infected). In particular, we denote by 
$R_0=R_e[K](\un)$  the  so-called \emph{basic  reproduction  number} associated to the
metapopulation  epidemiological model, see Lajmanovich and 
Yorke~\cite{lajmanovich1976deterministic}. Let  us mention  that in this  model  if 
$R_0\leq  1$,  then there  is  no  endemic  equilibrium (\emph{i.e.}, the epidemic
vanishes asymptotically), whereas if $R_0>1$, there exists at  least one non-trivial
endemic  equilibrium (which means that  the  epidemic is  persistent).   With  the
interpretation  of  the function~$R_e$ in mind,  it is then very natural to  minimize it
under a constraint on the cost of the vaccination strategies $\eta$.
This constrained optimization problem appears  in most of the literature for  designing
efficient  vaccination strategies  for multiple  epidemic situation (SIS/SIR/SEIR)
\cite{EpidemicsInHeCairns1989, hill-longini-2003,  TheMostEfficiDuijze2016,
  CriticalImmuneMatraj2012, poghotanyan_constrained_2018,               
OptimalInfluenEnayat2020, IdentifyingOptZhao2019,  ddz-theo}.   Note  that  in  some   of 
these references, the effective reproduction number is defined as the spectral radius of 
the matrix  $\Diag(\eta) \cdot K$. Since the  eigenvalues of $\Diag(\eta)  \cdot  K$  are
exactly  the  eigenvalues  of  the  matrix $K\cdot  \Diag(\eta)$, this  actually defines 
the same  function $R_e[K]$.

Given the importance of convexity to solve optimization problems efficiently, it is
natural to look for conditions on the matrix $K$ that imply convexity or concavity for the
map~$R_e[K]$ defined by \eqref{eq:informalPb}. Those properties can be useful to design
vaccination strategies  in the best possible way; see the companion papers~\cite{ddz-reg,
ddz-mono}.

\subsection{The finite dimensional case}

 In their investigation of the behavior of the 
map $R_e[K]$ defined in~\eqref{eq:informalPb}, Hill and Longini conjectured
in~\cite{hill-longini-2003} sufficient spectral conditions to get either concavity or
convexity. More precisely, guided by explicit examples, they state that $R_e[K]$ should be
convex if all the eigenvalues of $K$ are non negative real numbers, and that it should be
concave if all eigenvalues are real, with only one positive eigenvalue.

Our first series of results show that, while this conjecture cannot hold in full
generality -- see Section~\ref{sec:comparison_Hill_Longini} -- it is true under an
additional symmetry hypothesis. Recall that a matrix $K$ is called diagonally
symmetrizable if there exist positive numbers $(d_1,\ldots d_{N})$ such that for all
$i,j$, $d_i K_{ij} = d_j K_{ji}$. Such a matrix is diagonalizable with real eigenvalues
according to the spectral theorem for symmetric matrices. The following result, which
appears below in the text as Theorem~\ref{th:hill-longini-meta}, settles the conjecture
for diagonally symmetrizable matrices. Let us mention that the eigenvalue $\lambda_1$ in
the theorem below is non-negative and is equal to the spectral radius of $K$, that is,
$\lambda_1=R_e[K](\un)=R_0$, thanks to the Perron-Frobenius theory. We consider the
function $R_e=R_e[K]$ defined on $[0, 1]^N$. 

\begin{theorem}
  Let $K$ be an $N\times N$ matrix with non-negative entries. Suppose that $K$ is
  diagonally symmetrizable with eigenvalues $\lambda_1\geq\lambda_2\cdots \geq
  \lambda_{N}$.
  \begin{enumerate}[(i)]
  \item\label{item:cairns} If $\lambda_{N}\geq 0$, then the function $R_e$ is convex. 
  \item If $\lambda_2 \leq 0$, then the function $R_e$ is concave. 
  \end{enumerate}
\end{theorem}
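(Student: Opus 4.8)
The plan is to reduce everything to a \emph{symmetric} matrix and then, in each case, exhibit $R_e$ as an envelope of linear functions of $\eta$. First I would symmetrize: writing $D=\Diag(d_1,\dots,d_N)$ and $A=D^{1/2}KD^{-1/2}$, the hypothesis $d_iK_{ij}=d_jK_{ji}$ makes $A$ symmetric, while $A$ still has non-negative entries and the same eigenvalues $\lambda_1\ge\cdots\ge\lambda_N$ as $K$. Since $\Diag(\eta)$ commutes with $D$, one has $\rho(K\Diag(\eta))=\rho(A\Diag(\eta))$, so $R_e=R_e[A]$ and it suffices to treat symmetric $A\ge0$. For $\eta$ with positive entries, $A\Diag(\eta)$ is similar to the symmetric matrix $B(\eta):=\Diag(\sqrt\eta)\,A\,\Diag(\sqrt\eta)$, which has non-negative entries, so by Perron--Frobenius $R_e(\eta)=\rho(B(\eta))=\lambda_{\max}(B(\eta))$ with a non-negative top eigenvector. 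As $R_e$ is continuous and positively homogeneous, it is enough to argue on $\{\eta>0\}$ and pass to the boundary by continuity.

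For assertion (i) ($\lambda_N\ge0$, i.e.\ $A\succeq0$) I would factor $A=C^{\top}C$. Then $A\Diag(\eta)=C^{\top}C\Diag(\eta)$ has the same non-zero spectrum as $C\Diag(\eta)C^{\top}$, which is symmetric positive semidefinite, whence
\[
  R_e(\eta)=\lambda_{\max}\!\big(C\Diag(\eta)C^{\top}\big)=\max_{\norm{x}=1}\ \sum_{i}\eta_i\,(C^{\top}x)_i^2 .
\]
For each fixed $x$ the summand is \emph{linear} in $\eta$, so $R_e$ is a pointwise supremum of linear functions, hence convex.

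For assertion (ii) ($\lambda_2\le0$) the decisive point is that $B(\eta^0)$ has a single non-negative eigenvalue. Fix $\eta^0>0$ with unit top eigenvector $x^0\ge0$; since all other eigenvalues of $B(\eta^0)$ are $\le0$, its spectral decomposition gives $B(\eta^0)\preceq R_e(\eta^0)\,x^0(x^0)^{\top}$. Conjugating by $\Diag(\eta^0)^{-1/2}$ (a congruence, hence preserving $\preceq$ and sending $B(\eta^0)$ to $A$) turns this into $A\preceq R_e(\eta^0)\,\tilde v\,\tilde v^{\top}$ with $\tilde v_i=x^0_i/\sqrt{\eta^0_i}$. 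Now for any $\eta'>0$ the elementary implication ``$s\,\Diag(1/\eta')\succeq A\ \Rightarrow\ R_e(\eta')\le s$'' (apply the congruence by $\Diag(\sqrt{\eta'})$, which turns $A$ into $B(\eta')$ and the inequality into $sI\succeq B(\eta')$) applies with $s=\ell_{\eta^0}(\eta'):=R_e(\eta^0)\sum_i (x^0_i)^2\,\eta'_i/\eta^0_i$, because the rank-one domination $s\,\Diag(1/\eta')\succeq R_e(\eta^0)\tilde v\tilde v^{\top}\succeq A$ holds (with equality in the first step by the choice of $s$). Thus $R_e(\eta')\le\ell_{\eta^0}(\eta')$ for all $\eta'$, with equality at $\eta'=\eta^0$ since $\sum_i(x^0_i)^2=1$. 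Hence $R_e=\inf_{\eta^0}\ell_{\eta^0}$ is an infimum of linear functions, and therefore concave.

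The routine reductions (symmetrization, Perron--Frobenius, continuity up to the boundary) are standard; the substance lies in choosing the correct envelope. I expect the main obstacle to be the concave case: unlike convexity, it cannot simply be read off a Rayleigh-quotient maximum, and the key idea is to use the single positive eigenvalue to dominate $B(\eta^0)$ by a rank-one matrix and transport that domination back to $A$, producing a supporting linear function from \emph{above} at every point. Getting this ``dual'' representation right—and verifying the rank-one comparison together with the congruence bookkeeping—is where the care is needed.
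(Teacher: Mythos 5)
Your proof is correct. Part~(i) is essentially the paper's own argument: factor the positive semi-definite symmetrized matrix as $A=C^{\top}C$ (the paper uses the self-adjoint square root $Q$ with $Q^2=T$), commute to get the symmetric matrix $C\Diag(\eta)C^{\top}$, and read off convexity from the Rayleigh-quotient representation of $R_e$ as a supremum of linear functions of $\eta$. Part~(ii), however, takes a genuinely different route. The paper proves concavity by perturbation theory: it shows $\alpha\mapsto R_e(\eta_\alpha)$ is analytic along segments, computes the second derivative explicitly as $R''(\alpha)=2R(\alpha)\sum_{n}\frac{\lambda_n}{R(\alpha)-\lambda_n}a_n^2\braket{u_n,u_n}_\alpha$ in a weighted $L^2$ space, and concludes $R''\leq 0$ because the non-Perron eigenvalues are negative; this requires Sylvester's inertia theorem to guarantee that the top eigenvalue of $T_\alpha$ stays simple, plus the analyticity machinery of~\cite{EffectivePertuBenoit}. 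You instead build a concave envelope: the single positive eigenvalue of $B(\eta^0)$ (at most one, by congruence invariance of inertia, which is the same Sylvester input in disguise) yields the rank-one domination $B(\eta^0)\preceq R_e(\eta^0)x^0(x^0)^{\top}$, which you transport by congruence into a linear function $\ell_{\eta^0}$ bounding $R_e$ from above and touching it at $\eta^0$, so that $R_e=\inf_{\eta^0}\ell_{\eta^0}$ is concave. Your argument is more elementary (no differentiability or analyticity is needed) and pleasingly dual to the convex case, exhibiting $R_e$ as an infimum rather than a supremum of linear forms; the paper's computation is heavier but produces the explicit second-derivative formula, which is of independent interest and is the form of the argument that the authors carry over verbatim to compact self-adjoint operators on $L^2$. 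Your envelope argument would also extend to that infinite-dimensional setting with only minor care about the congruences, so nothing essential is lost. The only points worth making explicit in a final write-up are the degenerate case $A\preceq 0$ (where non-negativity of the entries forces $A=0$ and $R_e\equiv 0$) and the standard density/continuity step extending concavity from $\{\eta>0\}$ to all of $[0,1]^N$, both of which you correctly flag.
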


Note that the case~\ref{item:cairns} appears already in
Cairns~\cite{EpidemicsInHeCairns1989}; see 
Section~\ref{sec:comparison_Hill_Longini} below for a detailed comparison with existing
results. This completes results on log-convexity of the map $R_e[K]$
given in  \cite{Fried80,feng_elaboration_2015}. 
Notice also that if $K$ and $K'$ are diagonally similar up to transposition, they
define the same function~$R_e$; see \cite{ddz-pres} for more results in this direction.
Eventually, the concavity of the map $R_e[K]$ implies that $K$ has a
unique irreducible component in its Perron-Frobenius diagonalization
as shown in Lemma~\ref{lem:conc-mono} below.

\subsection{The general case}

We    now    give    our    main    result    in    the    setting    of
Section~\ref{sec:intro-Q}. We  give in  Definition~\ref{def:diag-sym} an extension  to the
notion  of ``diagonally  symmetrizable'' for  compact operators. For example, according to
Proposition~\ref{prop:sylvester}, if $T'$ is a self-adjoint compact operator on  $L^2$
and~$f,g$ are two non-negative measurable functions  defined on~$\Omega$ bounded and 
bounded away from $0$,  then the  operator  $T=M_f  \, T'  M_g$  is  a compact  diagonally
symmetrizable on $L^2$. In particular, Corollary~\ref{cor:diag-sym=spec-reel} states that
diagonally symmetrizable compact operators  on $L^p$, with $p\in  [1, +\infty )$, have a
real spectrum . 
   
For a  compact operator $T$,  let $\pos(T)$ (resp.\ $\nega(T)$) denote the number of 
eigenvalues with positive (resp.\ negative) real part  taking into account their 
(algebraic) multiplicity.  Then, we obtain the following result given in
Theorem~\ref{th:hill-longini} below.

\begin{theorem} [Convexity/Concavity of $R_e$]
  Let $T$  be a  positive compact  diagonally symmetrizable  operator on
  $L^p$ with $p\in [1, +\infty )$. We consider the function $R_e=R_e[T]$
  defined on~$\Delta$.
\begin{enumerate}[(i)]
   \item\label{item:hill-longini-cvx-intro} If $\nega(T)=0$, then the function $R_e$
     is convex.
   \item\label{item:hill-longini-cave-intro} If
     $\pos(T)=1$, 
     then the function $R_e$ is concave.
   \end{enumerate}
 \end{theorem}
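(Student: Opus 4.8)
The plan is to reduce everything to a self-adjoint operator on $L^2$ and then treat the two cases by completely different variational devices. By the extended notion of Definition~\ref{def:diag-sym} and Proposition~\ref{prop:sylvester} we may write $T=M_f\,T'\,M_g$ with $T'$ self-adjoint on $L^2$ and $f,g$ positive, bounded and bounded away from~$0$. Using the cyclic invariance of the spectral radius ($\rho(AB)=\rho(BA)$ on the non-zero spectrum) and the fact that the multiplication operators commute, I would check that $R_e[T](h)=\rho(T M_h)=\rho(S M_h)$ where $S:=M_{\sqrt{fg}}\,T'\,M_{\sqrt{fg}}$ is \emph{self-adjoint}, has the same non-zero spectrum as $T$ (hence $\pos(S)=\pos(T)$ and $\nega(S)=\nega(T)$), and, by Corollary~\ref{cor:diag-sym=spec-reel}, real spectrum independent of $p$. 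Since $TM_h$ is cone-preserving, Krein--Rutman gives $\rho(SM_h)\geq 0$ and this value is an eigenvalue of the self-adjoint operator $M_{\sqrt h}\,S\,M_{\sqrt h}$, so it equals its largest eigenvalue and thus $R_e[T](h)=\sup_{\|\phi\|_2=1}\langle M_{\sqrt h}\phi,\,S\,M_{\sqrt h}\phi\rangle$. Note that $R_e[T]$ is non-negative and positively homogeneous of degree one on $L^\infty_+$; from now on I assume $T=S$ self-adjoint and work on the cone $L^\infty_+$, restricting to $\Delta$ at the end.

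For the convex case~\ref{item:hill-longini-cvx-intro}, $\nega(T)=0$ means that all eigenvalues of $T$ are non-negative, so $T\succeq 0$ and its compact self-adjoint square root $T^{1/2}$ exists. Cyclic invariance gives $\rho(TM_h)=\rho\big(T^{1/2}M_h T^{1/2}\big)$, and since $T^{1/2}M_h T^{1/2}\succeq 0$ is self-adjoint its spectral radius is its top eigenvalue, whence
\[
R_e[T](h)=\sup_{\|\phi\|_2=1}\big\langle T^{1/2}\phi,\;M_h\,T^{1/2}\phi\big\rangle=\sup_{\|\phi\|_2=1}\int_\Omega h\,\big|T^{1/2}\phi\big|^2\,\rd\mu .
\]
For each fixed $\phi$ the integral is \emph{linear} in $h$, so $R_e[T]$ is a supremum of affine functionals of $h$ and is therefore convex.

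The concave case~\ref{item:hill-longini-cave-intro} is the heart of the matter and the main obstacle. Here $\pos(T)=1$, so there is a single positive eigenvalue $\lambda_1>0$ with non-negative eigenfunction $u_1$ and all other eigenvalues $\leq 0$, giving the Lorentzian-type decomposition $q_T(\psi):=\langle\psi,T\psi\rangle=\lambda_1\langle u_1,\psi\rangle^2-\langle\psi,N\psi\rangle$ with $N\succeq 0$. The key geometric lemma I would isolate is a \emph{reverse Cauchy--Schwarz} statement: the forward cone $\mathcal{C}=\{\psi:\ q_T(\psi)\geq 0,\ \langle u_1,\psi\rangle\geq 0\}$ is convex and $\sqrt{q_T}$ is concave (superadditive) on $\mathcal{C}$; in particular $\{q_T\geq 1\}\cap\mathcal{C}$ is convex. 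Substituting $\psi=\sqrt h\,\phi$ in the Rayleigh formula turns it into
\[
R_e[T](h)=\sup_{\psi\in\mathcal{C}}\ \frac{q_T(\psi)}{\displaystyle\int_\Omega \psi^2/h\,\rd\mu},
\]
the constraint $\psi=0$ on $\{h=0\}$ being enforced automatically by the convention $\psi^2/h=+\infty$ there. I would then show that every super-level set $\{h:\ R_e[T](h)\geq t\}$, $t>0$, is convex by exhibiting it as the projection onto the $h$-variable of the set $\{(\psi,h):\ \psi\in\mathcal{C},\ q_T(\psi)\geq 1,\ \int\psi^2/h\leq 1/t\}$: this set is convex because $\{q_T\geq1\}\cap\mathcal{C}$ is convex and the perspective integrand $(\psi,h)\mapsto\psi^2/h$ is jointly convex, and a linear image of a convex set is convex. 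Thus $R_e[T]$ is quasi-concave; being also non-negative and positively homogeneous of degree one, quasi-concavity upgrades to genuine concavity (convexity of $\{R_e\geq 1\}$ together with homogeneity yields superadditivity, hence concavity).

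The steps I expect to require the most care are, first, the reduction across $L^p$: one must make sure $\rho_{L^p}(TM_h)$ coincides with the spectral radius of the $L^2$ self-adjoint model and that the counts $\pos,\nega$ transfer, which is exactly what Corollary~\ref{cor:diag-sym=spec-reel} and the symmetrizability framework are there to supply. Second, the boundary behaviour when $h$ vanishes on a set of positive measure (where $1/h$ blows up and $R_e$ may equal $0$): I would first establish convexity/concavity for $h$ bounded away from $0$, where the Rayleigh and perspective manipulations are unambiguous, and then pass to general $h\in\Delta$ by a monotone limiting argument. The genuinely conceptual difficulty is the concave case, i.e.\ proving the reverse Cauchy--Schwarz lemma on $\mathcal{C}$ and combining it with the joint convexity of the perspective function to get convex super-level sets; the convex case is comparatively immediate.
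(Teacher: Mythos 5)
Your convex case coincides with the paper's: reduce to a self-adjoint operator on $L^2$, take the square root $Q$ with $Q^2=T$, use $\rho(TM_h)=\rho(QM_hQ)$ and the min--max principle to write $R_e[T]$ as a supremum of linear functionals of $h$. Your concave case, however, is a genuinely different route. The paper fixes a segment $\eta_\alpha=(1-\alpha)\eta_0+\alpha\eta_1$ in the set of strategies bounded away from $0$, uses Sylvester's inertia theorem (Proposition~\ref{prop:sylvester}) to guarantee that $\pos(TM_{\eta_\alpha})=1$ so that $R(\alpha)=R_e(\eta_\alpha)$ stays a simple positive eigenvalue, and then invokes analytic perturbation theory to compute $R''(\alpha)=2R(\alpha)\sum_{n}\frac{\lambda_n}{R(\alpha)-\lambda_n}a_n^2\braket{u_n,u_n}_\alpha\leq 0$. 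You instead exploit the Lorentzian structure $q_T(\psi)=\lambda_1\langle u_1,\psi\rangle^2-\langle\psi,N\psi\rangle$ with $N\succeq 0$, prove the reverse Cauchy--Schwarz inequality on the forward cone (which is correct: with $a=\sqrt{\lambda_1}\langle u_1,\psi\rangle$ and $b=\sqrt{\langle\psi,N\psi\rangle}$ one has $aa'-bb'\geq\sqrt{(a^2-b^2)(a'^2-b'^2)}$ since $(aa'-bb')^2-(a^2-b^2)(a'^2-b'^2)=(ab'-a'b)^2$), and combine the convexity of $\{q_T\geq 1\}\cap\mathcal{C}$ with the joint convexity of the perspective $(\psi,h)\mapsto\psi^2/h$ to get convex super-level sets, then upgrade quasi-concavity to concavity via homogeneity. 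This avoids perturbation theory and the second-derivative formula entirely (and, for the concavity proof proper, avoids Sylvester's theorem); what it gives up is the explicit expression~\eqref{eq:R2} for $R''$, which the paper's method yields as a by-product. Both arguments need the same Krein--Rutman input to identify $\rho(TM_h)$ with the \emph{top} eigenvalue of the symmetrized operator, and both need the same density argument ($h$ bounded away from $0$ first, then Lemma~\ref{lem:cont-spec}) to handle vanishing $h$.

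One step of your reduction is mis-stated and should be repaired. Definition~\ref{def:diag-sym} does \emph{not} provide a factorization $T=M_f\,T'\,M_g$ with $f,g$ bounded and bounded away from $0$: it only asserts consistency of $T$ with a self-adjoint $T'$ on $L^2(\mu')$ for some mutually absolutely continuous $\mu'$, and the paper explicitly refrains from imposing any upper or lower bound on $\rd\mu'/\rd\mu$ (see the remark on related notions); Proposition~\ref{prop:sylvester} goes in the opposite direction (from $T$ to $M_fTM_g$), so it cannot be used to produce such $f,g$. The fix is immediate and is the one the paper uses: Lemma~\ref{lem:Sp=DiagSym} gives $R_e[T]=R_e[T']$ on $\Delta$ together with $\mult(\lambda,T)=\mult(\lambda,T')$ for $\lambda\in\C^*$, hence $\pos(T')=\pos(T)$ and $\nega(T')=\nega(T)$, so you may simply run your two arguments for the self-adjoint operator $T'$ on $L^2(\mu')$ and transfer the conclusion. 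With that substitution, your proof is complete.
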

 The proof of  the concavity property relies on  the explicit expression
 of the second  derivative of $R_e[T]$ when $T$ is  self-adjoint and the
 uses of the Sylvester's inertia theorem.

The  concavity  property of  $R_e[T]$  implies  a strong  structural property on the
operator $T$. In  order to establish this result, we present  in
Section~\ref{sec:reducible}  an atomic  decomposition of the    space    $\Omega$   
related    to    the    operator    $T$ following~\cite{schwartz61}.
In  particular,  we extend  the  notion  of
quasi-irreducible operator to  the non self-adjoint case  and say an
operator is  monatomic if  it has  only one  non-trivial irreducible
component; see       Definition~\ref{def:monatomic}        in
Section~\ref{sec:q-irr-monat}.   If   $T$  is  a   positive  compact
operator   on   $L^p$  for   some   $p\in   [1,  +\infty   )$   with
$R_0=R_e[T](\un)>0$, where $\un \in \Delta$ is the constant function
equal to 1, then we have the following properties:
\begin{enumerate}[(i)]
  \item If $R_e[T]$ is concave, then $T$ is monatomic according to Lemma~\ref{lem:conc-mono}.
  \item If  $\pos(T)=1$, then  $R_0$ is
    simple and the only eigenvalue in $\R_+^*$, and thus $T$ is monatomic according to
    Lemma~\ref{lem:R0simple}.
  \item More generally, using the  decomposition of a reducible operator
    from      Lemma     \ref{lem:Rei},      we      get     that      if
    $\spec(T) \subset \R_-\cup\{R_0\}$ and  $T$ is a  diagonally
    symmetrizable operator,  then the function  $R_e$ is the  maximum of
    $m$  concave  functions  which are  non-zero  on  $m$
    pairwise disjoint subsets of $\Delta$, where $m$ is the (algebraic)
    multiplicity of $R_0$.
\end{enumerate}

Eventually, by considering a general positive compact operator $T$  on
$L^p$  for   some   $p\in   [1,  +\infty   )$, following
~\cite{schwartz61}, we provide in Corollary~\ref{cor:Speci} the decomposition $R_e[T]$  on the
irreducible atoms: 
   \[
      R_e[T]= \max_{i\in I}   R_e[T_i] ,
    \]
where $  T_i(\cdot)=\ind{\Omega_i} \, T\, (\ind{\Omega_i} \cdot)$ with
$(\Omega_i, i\in I)$ the at most countable collection of irreducible atoms in
$\Omega$ associated to $T$. 

\subsection{Structure of the paper}

After      recalling       the      mathematical       framework      in
Section~\ref{sec:settings}, we discuss invariance properties of $R_e$ in
Section~\ref{sec:equivalent}. The convexity properties  of $R_e$ and the
related   conjecture   of   Hill    and   Longini   are   discussed   in
Section~\ref{sec:Hill_Longini}.  Finally, the  case of reducible operators
is   treated  in   Section~\ref{sec:reducible},   using  the   Frobenius
decomposition from \cite{schwartz61}.

\section{Setting, notations and previous results}\label{sec:settings}

\subsection{Spaces, operators, spectra}\label{sec:spaces}

All metric spaces~$(S,d)$ are endowed with their Borel~$\sigma$-field denoted by $\cb(S)$.
The set $\ck$ of compact subsets of~$\C$ endowed with the Hausdorff distance
$d_\mathrm{H}$ is a metric space, and the function~$\mathrm{rad}$ from~$\ck$ to $\R_+$
defined by~$\mathrm{rad}(K)=\max\{|\lambda|\, ,\, \lambda\in K\}$ is Lipschitz continuous
from~$(\ck,d_\mathrm{H})$ to~$\R$ endowed with its usual Euclidean distance.

Let~$(\Omega,  \cf,  \mu)$  be  a measured space, with $\mu$ a $\sigma$-finite
(positive and non-zero) measure.   For~$f$  and~$g$
real-valued      functions     defined      on~$\Omega$,     we      may
write~$\langle f, g \rangle$ or $\int_\Omega  f g \, \mathrm{d} \mu$ for
$\int_\Omega  f(x) g(x)  \,\mu( \mathrm{d}  x)$ whenever  the latter  is
meaningful.     For~$p    \in    [1,    +\infty]$,    we    denote    by
$L^p=L^p(  \mu)=L^p(\Omega, \mu)$  the space  of real-valued  measurable
functions~$g$            defined on~$\Omega$           such            that
$\norm{g}_p=\left(\int |g|^p  \, \mathrm{d} \mu\right)^{1/p}$  (with the
convention  that~$\norm{g}_\infty$  is the~$\mu$-essential  supremum  of
$|g|$)   is  finite,   where  functions   which  agree~$\mu$-a.e.\   are
identified.  We denote  by~$L^p_+$ the  subset of~$L^p$  of non-negative
functions.   We  define~$\Delta$  as the   subset  of  $L^\infty  $  of
$[0, 1]$-valued measurable functions defined on~$\Omega$.
We denote by $\un$ (resp.\ $\zero$) the constant function on $\Omega$
equal to $1$ (resp.\ $0$).

Let~$(E,   \norm{\cdot})$  be   a  complex   Banach  space.   We  denote
by~$\norm{\cdot}_E$ the operator norm on~$\cll(E)$ the Banach algebra of
bounded operators. The spectrum~$\spec(T)$  of~$T\in \cll(E)$ is the set
of~$\lambda\in \C$ such that~$ T -  \lambda \mathrm{Id}$ does not have a
bounded inverse  operator, where~$\mathrm{Id}$ is the  identity operator
on~$E$. Recall that~$\spec(T)$ is a compact subset of~$\C$, and that the
spectral radius of~$T$ is given by:
\begin{equation}\label{eq:def-rho}
  \rho(T)=\mathrm{rad}(\spec(T))=
  \lim_{n\rightarrow \infty } \norm{T^n}_E^{1/n}.
\end{equation}
The element $\lambda\in \spec(T)$ is an eigenvalue if there exists $x\in E$ such that
$Tx=\lambda x$ and $x\neq 0$. Following \cite{konig}, we define the (algebraic)
multiplicity of $\lambda\in \C$ by:
\[
  \mult(\lambda, T)= \dim \left( \bigcup_{k\in \N^*} \ker (T- \lambda
  \mathrm{Id})^k\right),
\]
so that $\lambda$ is an eigenvalue if $\mult(\lambda, T)\geq 1$. We say the eigenvalue
$\lambda$ of $T$ is \emph{simple} if $\mult(\lambda, T)=1$. 

If~$E$ is also an algebra of functions,  for~$g \in E$, we denote by~$M_g$
the multiplication operator  (possibly unbounded) defined by~$M_g(h)=gh$
for all~$h \in E$; if furthermore $g$ is the  indicator function of a
set $A$, we simply
write $M_A$ for $M_{\ind{A}}$.

\subsection{Invariance and continuity of the spectrum for compact operators}\label{sec:inv-cont}

We  collect some  known  results  on the  spectrum  and multiplicity  of
eigenvalues related  to compact operators. Let~$(E,  \norm{\cdot})$ be a
complex  Banach space.  Let~$A\in \cll(E)$.  We denote  by $A^\top$  the
adjoint of  $A$. A  sequence~$(A_n,n \in  \N)$ of  elements of~$\cll(E)$
converges          strongly          to~$A         \in          \cll(E)$
if~$\lim_{n\rightarrow  \infty   }  \norm{A_nx  -Ax}=0$   for  all~$x\in
E$. Following~\cite{anselone}, a set  of operators~$\ca \subset \cll(E)$
is          \emph{collectively          compact}         if          the
set~$\{ A x \, \colon \, A  \in \ca, \, \norm{x}\leq 1 \}$ is relatively
compact. Recall  that the spectrum  of a  compact operator is  finite or
countable   and  has   at  most   one  accumulation   point,  which   is
$0$. Furthermore,  $0$ belongs to  the spectrum of compact  operators in
infinite dimension.

We refer  to \cite{schaefer_banach_1974}  for an introduction  to Banach
lattices and positive operators; we  shall only consider the real Banach
lattices $L^p=L^p(\Omega, \mu)$  for $p\in [1, +\infty ]$  on a measured
space $(\Omega, \cf,  \mu)$ with a $\sigma$-finite  positive non-zero measure, as
well as their complex extension. (Recall that the norm of an operator on
$L^p$    or   its    natural    complex   extension    is   the    same,
see~\cite[Corollary~1.3]{complex}).
A  bounded operator $A$ on $L^p$  is positive
if  $A(L^p_+)\subset  L^p_+$. 

We  say  that   two  complex  Banach  spaces   $(E,  \norm{\cdot})$  and $(E',        
\norm{\cdot}')$          are         compatible         if $(E\cap  E',  \norm{\cdot}+ 
\norm{\cdot}')$  is  a  Banach  space,  and
$E   \cap  E'$   is   dense  in   $E$  and   in   $E'$. Given two compatible spaces $E$
and $E'$,   two   operators  $A\in  
\cll(E)$  and
$A'\in   \cll(E')$ are said to be   consistent   if,   with   
$E''=E\cap   E'$, $A(E'')\subset  E''$,   $A'(E'')\subset  E''$   and  $Ax=A'x$   for  all
$x\in E''$.

\begin{lemma}[Spectral properties]
  \label{lem:prop-spec-mult}
  Let $A, B$ be elements of $\cll(E)$. 
  \begin{enumerate}[(i)]
  \item\label{item:A-B} If $E$ is a Banach lattice, and if~$A$,~$B$ and~$A-B$ are positive
    operators, then we have:
    \begin{equation}\label{eq:r(A)r(B)} \rho(A)\geq \rho(B). 
    \end{equation}
  \item\label{item:spec-adjoint-mult} If $A$ is compact, then $A^\top$,  $AB$ and $BA$
    are compact and we have:
    \begin{align}\label{eq:adjoint-mult}
      \spec(A)=\spec(A^\top) &\quad\text{and}\quad
      \mult(\lambda,A)=\mult(\lambda, A^\top) \quad\text{for $\lambda\in
      \C^*$},\\
      \label{eq:r(AB)-mult}
      \spec(AB)=\spec(BA) & \quad\text{and}\quad
      \mult(\lambda, AB)=\mult(\lambda, BA)\quad\text{for $\lambda\in \C^*$},
    \end{align}
    and in particular:
    \begin{equation}\label{eq:r(AB)}
      \rho(AB)=\rho(BA). 
    \end{equation}
  \item \label{item:density-mult} Let~$(E', \norm{\cdot}')$ be a complex
    Banach space and $A'\in \cll(E')$  such
    $(E, \norm{\cdot})$ and $(E',
\norm{\cdot}')$ are compatible, and $A$ and $A'$ are consistent.  If $A$
    and $A'$ are compact, then we have:
    \begin{equation}\label{eq:sT=sT'}
      \spec(A)=\spec(A')
      \quad\text{and}\quad
      \, \mult(\lambda, A)=\mult(\lambda, A')\quad\text{for $\lambda\in
      \C^*$}.
    \end{equation}
  \item \label{item:collectK-cv} Let~$(A_n, n\in \N)$ be a collectively compact sequence
    which converges strongly to~$A$. Then, we have $\lim_{n\rightarrow \infty }
    \spec(A_n)=\spec(A)$ in~$(\ck, d_\mathrm{H})$, $\lim_{n\rightarrow }
    \rho(A_n)=\rho(A)$ and for $\lambda\in
    \spec(A)\cap \C^*$, $r>0$ such that $\lambda'\in \spec(A)$ and
    $|\lambda-\lambda'|\leq r$ implies $\lambda=\lambda'$, and
    all $n$ large enough:
    \begin{equation}
      \label{eq:collectK-mult}
      \mult(\lambda, A)= \sum_{\lambda'\in
      \spec(A_n),\, |\lambda-\lambda'|\leq r} \mult(\lambda', A_n).
    \end{equation}
  \end{enumerate}
\end{lemma}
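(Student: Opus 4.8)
The plan is to handle the four items separately, since (i)--(ii) are essentially classical while (iii)--(iv) carry the real work.

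For~\eqref{eq:r(A)r(B)} in item~(i) I would argue by monotonicity of powers. From $0\le B$ and $0\le A-B$ one has $0\le B\le A$ in the operator order, and an induction on $n$ based on $A^{n+1}-B^{n+1}=A(A^n-B^n)+(A-B)B^n$ gives $0\le B^n\le A^n$ for every $n$. Because the lattice norm is monotone and satisfies $\norm{|x|}=\norm{x}$, any operators with $0\le S\le T$ obey $\norm{Sx}=\norm{|Sx|}\le \norm{S|x|}\le\norm{T|x|}\le\norm{T}_E\norm{x}$, hence $\norm{S}_E\le\norm{T}_E$; applied to $S=B^n$, $T=A^n$ and fed into Gelfand's formula~\eqref{eq:def-rho} after taking $n$-th roots, this yields $\rho(B)\le\rho(A)$. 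For item~(ii), compactness of $A^\top$ is Schauder's theorem and $AB,BA$ are compact since a bounded operator composed with a compact one is compact. The equality $\spec(A)=\spec(A^\top)$ holds for any bounded operator, as $(A-\lambda\mathrm{Id})^\top=A^\top-\lambda\mathrm{Id}$ is invertible iff $A-\lambda\mathrm{Id}$ is; the multiplicity equality in~\eqref{eq:adjoint-mult} for $\lambda\in\C^*$ comes from $\dim\ker(A-\lambda\mathrm{Id})^k=\dim\ker(A^\top-\lambda\mathrm{Id})^k$ on the finite-dimensional generalized eigenspaces, for which I would cite~\cite{konig}. For~\eqref{eq:r(AB)-mult} the identity $(\lambda\mathrm{Id}-BA)^{-1}=\lambda^{-1}(\mathrm{Id}+B(\lambda\mathrm{Id}-AB)^{-1}A)$, valid for $\lambda\neq0$, shows $\spec(AB)\setminus\{0\}=\spec(BA)\setminus\{0\}$ with matching multiplicities (again~\cite{konig}); full equality of spectra follows since in infinite dimension $0$ lies in both by compactness, and in finite dimension the characteristic polynomials coincide. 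Then~\eqref{eq:r(AB)} is immediate, as $\rho=\mathrm{rad}\circ\spec$ and the value $0$ never changes the maximal modulus.

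Item~(iii) is the principal difficulty. Writing $E''=E\cap E'$, I would first show that the common restriction $A''$ of $A$ and $A'$ to $E''$ is a well-defined bounded operator and, crucially, compact: a sequence bounded in $E''$ is bounded in $E$ and in $E'$, so compactness of $A$ and $A'$ and a diagonal extraction produce a subsequence with $A''x_{n_k}$ convergent in $E$ and in $E'$; compatibility of $E$ and $E'$ forces these limits to coincide as an element of $E''$, giving $E''$-convergence. Next, for $\lambda\in\C^*$ outside $\spec(A)\cup\spec(A')$, injectivity of $A''-\lambda\mathrm{Id}$ (inherited from $A-\lambda\mathrm{Id}$) together with the Fredholm alternative makes $A''-\lambda\mathrm{Id}$ invertible, and uniqueness of solutions shows the three resolvents are consistent on $E''$. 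Finally, for an isolated nonzero $\lambda_0$ I would integrate these consistent resolvents over a small circle enclosing only $\lambda_0$ and avoiding $\spec(A)\cup\spec(A')$, producing Riesz projections $P$, $P'$, $P''$ that agree on $E''$. Since $P$ is continuous with finite-dimensional (hence closed) range, $E''$ is dense in $E$, and the range of $P''$ is $\ker(A''-\lambda_0\mathrm{Id})^k\subset E''$, the identity $P|_{E''}=P''$ forces $\ker(A-\lambda_0\mathrm{Id})^k=\ker(A''-\lambda_0\mathrm{Id})^k$; the same argument with $A'$ gives~\eqref{eq:sT=sT'} for nonzero $\lambda$, while the value $0$ is settled by observing that density forces $E,E',E''$ to be simultaneously finite- or infinite-dimensional.

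For item~(iv) I would lean on Anselone's collectively compact approximation theory~\cite{anselone}: strong convergence of a collectively compact sequence makes the limit $A$ compact, forces the resolvents to converge uniformly on compact subsets of the resolvent set, and yields spectral projections of stable finite rank. Translating these facts gives Hausdorff convergence $\spec(A_n)\to\spec(A)$ in $(\ck,d_\mathrm{H})$ — no spurious eigenvalues and no missing spectrum — whence $\rho(A_n)\to\rho(A)$ by Lipschitz continuity of $\mathrm{rad}$. The multiplicity formula~\eqref{eq:collectK-mult} then records precisely that, for large $n$, the $A_n$-eigenvalues inside the fixed disc around $\lambda$ are exactly those into which $\lambda$ splits, so the rank of the limiting projection equals the sum of the ranks of the approximating ones.

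The main obstacle is item~(iii): the delicate point is not the equality of spectra but of multiplicities, since generalized eigenspaces must be compared across two inequivalent norms. The Riesz-projection argument, leveraging the density of $E''$ and the automatic closedness of finite-dimensional subspaces, is what transfers dimensions between $E$ and $E'$, and establishing compactness of the intermediate operator $A''$ is the enabling first step on which everything else rests.
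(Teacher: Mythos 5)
Your proposal is correct, but it is noticeably more self-contained than the paper's own proof, which handles most items by citation: item~\ref{item:A-B} is referred to \cite[Theorem~4.2]{marek}, item~\ref{item:density-mult} to \cite[Theorem~4.2.15]{Davies07}, the multiplicity statements in item~\ref{item:spec-adjoint-mult} to \cite{konig}, and item~\ref{item:collectK-cv} to \cite{anselone, SpectralProperAnselo1974}; the only argument the paper actually writes out is the treatment of whether $0$ belongs to $\spec(AB)$ versus $\spec(BA)$, which you reproduce identically (compactness in infinite dimension, determinants in finite dimension). Where you genuinely diverge is in supplying direct proofs for \ref{item:A-B} and \ref{item:density-mult}. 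For \ref{item:A-B}, your induction $A^{n+1}-B^{n+1}=A(A^n-B^n)+(A-B)B^n$ combined with $|Sx|\le S|x|\le T|x|$ and monotonicity of the lattice norm is the standard proof of Marek's inequality and is complete. For \ref{item:density-mult}, your chain --- compactness of the common restriction $A''$ via diagonal extraction and completeness of $E\cap E'$ under the sum norm, invertibility of $A''-\lambda\mathrm{Id}$ by the Fredholm alternative, consistency of the three resolvents, and transfer of the finite-rank Riesz projections using density of $E''$ and closedness of finite-dimensional ranges --- is exactly the content of the cited theorem of Davies, correctly reconstructed; the observation that a dense subspace of a finite-dimensional range must be the whole range is the key step that makes the multiplicities (not just the spectra) match, and you identify it as such. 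What the paper's approach buys is brevity; what yours buys is a verifiable argument that does not require the reader to check that the cited statements apply verbatim (e.g.\ that Davies's notion of compatibility agrees with the one used here). Both are sound.
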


\begin{proof}
  Property       \ref{item:A-B}       can        be       found       in
  \cite[Theorem~4.2]{marek}. Property~\ref{item:density-mult} is
  in~\cite[Theorem~4.2.15]{Davies07}.

  \medskip
       
  Equation   \eqref{eq:adjoint-mult}   from
  Property  \ref{item:spec-adjoint-mult}   can  be  deduced   from  from
  \cite[Theorem~p.\ 20]{konig}. Using \cite[Proposition~p.\ 25]{konig}, we
  get     the    second     part     of    \eqref{eq:r(AB)-mult}     and
  $ \spec(AB) \cap \C^*=\spec(BA)  \cap \C^*$, and thus \eqref{eq:r(AB)}
  holds.  To  get the  first  part  of \eqref{eq:r(AB)-mult},  we only
  need to consider if $0$ belongs to the spectrum or not. We first 
  consider  the infinite  dimensional case:  as~$A$ is  compact, we  get
  that~$AB$ and~$BA$  are compact,  thus~$0$ belongs to  their spectrum.
  We     then     consider      the     finite     dimensional     case:
  as~$\mathrm{det}(AB)=\mathrm{det}(A)\mathrm{det}(B)
  =\mathrm{det}(BA)$, where~$A$  and~$B$ denote  also the matrix  of the
  corresponding operator in a given base, we get that~$0$ belongs to the
  spectrum of~$AB$ if and only if it belongs to the spectrum of~$BA$.

  \medskip

  We eventually  check Property  \ref{item:collectK-cv}. We  deduce from
  \cite[Theorems~4.8 and  4.16]{anselone} (see also (d),  (g) [take care
  that  $d(\lambda,  K)$  therein   is  the  algebraic  multiplicity  of
  $\lambda$  for  the  compact  operator   $K$  and  not  the  geometric
  multiplicity]  and (e)  in~\cite[Section~3]{SpectralProperAnselo1974})
  that~$\lim_{n\rightarrow    \infty    }    \spec(A_n)=\spec(A)$    and
  \eqref{eq:collectK-mult}.  Then  use that  the function~$\mathrm{rad}$
  is continuous  to deduce the  convergence of the spectral  radius from
  the convergence of the spectra.
\end{proof}

We complete  this section with  an example of compatible  Banach spaces.
According to \cite[Problem~2.2.9~p.\ 49]{Davies07},  the spaces $L^p(\mu)$
are compatible for all $p\in [1,  +\infty )$. We shall use the following
slightly  more  general  result.   We recall  that  two  $\sigma$-finite
measures  on  $(\Omega,  \cf)$,  say  $\mu $  and  $\nu$,  are  mutually
absolutely    continuous     if    for     $A\in    \cf$,     we    have
$\mu(A)=0  \Longleftrightarrow \nu(A)=0$.  Thanks  to the  Radon-Nikodym
theorem,  the $\sigma$-finite  measures  $\mu $  and  $\nu$ are  mutually
absolutely  continuous if  and only  if there  exists a  positive finite
measurable function $h$ such that $\rd \nu = h \, \rd \mu$.

\begin{lemma}[Compatibility of $L^p$ spaces]\label{lem:Lp-compatible}
  Let  $\mu $  and $\nu$ be two $\sigma$-finite measures on $(\Omega, \cf)$ which are
  mutually absolutely   continuous, and let $p,r\in [1, +\infty )$. Then, the spaces
  $L^p(\mu)$ and $L^r(\nu)$ are compatible. 
\end{lemma}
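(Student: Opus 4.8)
The plan is to verify the three defining properties of compatibility directly, using the Radon--Nikodym density. Write $E = L^p(\mu)$ and $E' = L^r(\nu)$. Since $\mu$ and $\nu$ are mutually absolutely continuous, a property holds $\mu$-a.e.\ if and only if it holds $\nu$-a.e.; in particular the two spaces consist of equivalence classes for the \emph{same} a.e.\ identification of measurable functions, so the intersection $E \cap E'$ is well defined as the set of classes lying in both $E$ and $E'$. Fix, as in the statement, a positive finite measurable function $h$ with $\rd\nu = h\, \rd\mu$; then $0<h<\infty$ a.e., and $\rd\mu = h^{-1}\, \rd\nu$.

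For completeness of $(E \cap E',\, \norm{\cdot}_{L^p(\mu)} + \norm{\cdot}_{L^r(\nu)})$, I would take a sequence $(f_n)$ Cauchy for the sum norm. It is then Cauchy in each of $E$ and $E'$ separately, so by completeness of $L^p(\mu)$ and of $L^r(\nu)$ there exist limits $f \in E$ and $g \in E'$. Extracting a subsequence converging $\mu$-a.e.\ to $f$, and then a further subsequence converging $\nu$-a.e.\ --- hence $\mu$-a.e.\ --- to $g$, I obtain $f = g$ a.e. Thus the common limit lies in $E \cap E'$ and $f_n \to f$ for the sum norm, which gives completeness.

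For density, the key is to exhibit a single subset of $E \cap E'$ dense in both $E$ and $E'$. Using $\sigma$-finiteness of $\mu$, pick an increasing sequence $(\Omega_n)$ with $\mu(\Omega_n) < \infty$ and $\bigcup_n \Omega_n = \Omega$, and set $B_n = \Omega_n \cap \{ 1/n \le h \le n\}$. On $B_n$ both $h$ and $1/h$ are bounded, so $\mu(B_n) < \infty$ and $\nu(B_n) = \int_{B_n} h \, \rd\mu < \infty$; moreover $B_n$ increases to $\Omega$ up to a null set since $0 < h < \infty$ a.e. Consequently every bounded function supported on some $B_n$ belongs to both $E$ and $E'$. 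Given $f \in E$, the truncations $f_n = f\, \ind{B_n}\, \ind{\{|f| \le n\}}$ lie in $E \cap E'$, satisfy $|f_n| \le |f|$ and converge to $f$ $\mu$-a.e., so $f_n \to f$ in $E$ by dominated convergence; the symmetric argument applied to $g\, \ind{B_n}\, \ind{\{|g| \le n\}}$ for $g \in E'$ yields density in $E'$.

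The only genuinely delicate point is this density step: one must choose the truncation sets so that the resulting functions lie simultaneously in $L^p(\mu)$ and $L^r(\nu)$, which is exactly why I cut on the region where the Radon--Nikodym derivative is bounded away from $0$ and from $\infty$. Everything else reduces to the completeness of the individual $L^p$ spaces and to the interchangeability of $\mu$-a.e.\ and $\nu$-a.e.\ statements coming from mutual absolute continuity.
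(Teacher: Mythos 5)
Your proof is correct, and the completeness step is essentially the paper's argument (Cauchy in the sum norm, two limits, a.e.\ subsequences, identification of the limits). The density step, however, takes a genuinely different route. The paper fixes a single auxiliary function $f\in L^r(\nu)$ with $f>0$ a.e.\ and approximates a non-negative $g\in L^p_+(\mu)$ by the truncations $\min(g,nf)$, which lie in $L^p(\mu)\cap L^r(\nu)$ and converge to $g$ in $L^p(\mu)$ by dominated convergence; the general case and the density in $L^r(\nu)$ follow by symmetry. You instead exhaust $\Omega$ by the sets $B_n=\Omega_n\cap\{1/n\le h\le n\}$, where $h=\rd\nu/\rd\mu$, and truncate both the function and its support; boundedness of $h$ and $1/h$ on $B_n$ together with $\mu(B_n)<\infty$ guarantees that the truncations lie in both spaces. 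Both arguments rest on $\sigma$-finiteness (the paper needs it to produce a strictly positive element of $L^r(\nu)$, you need it for the exhaustion $(\Omega_n)$) and both close with dominated convergence. Your version has the small advantages of handling signed functions directly, without splitting into positive and negative parts, and of making explicit where the Radon--Nikodym density enters; the paper's version is shorter because a single comparison function replaces the double truncation. There is no gap in either step of your argument.
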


\begin{proof}
  First note that a property is true $\mu$-a.e.\ if and only if it is true $\nu$-a.e.\
  since $\mu$ and $\nu$ are mutually absolutely continuous. Hence, we shall simply write
  that that the property is true a.e.\ in this case.

  Let us prove that $L^p(\mu) \cap L^r(\nu)$ is dense in $L^p(\mu)$. Let $f\in
  L^r(\nu)$ such that $f>0$ a.e. For any $g\in L^p_+(\mu)$ note that the  non-decreasing
  sequence $(\min (g, nf), n\in \N)$ of  elements of  $L^p(\mu)\cap L^r(\nu)$  converges
  towards $g$ a.e.; and so, it converges in $L^p(\mu)$ according to the dominated
  convergence theorem. This gives $ L^p(\mu)\cap L^r(\nu)$ is dense in $L^p(\mu)$ and in
  $L^r(\nu)$ by symmetry.

  To prove that $L^p(\mu) \cap L^r(\nu)$ is complete (with respect to the norm given by the sum of
  the norms in $L^p(\mu)$ and $L^r(\nu)$), it is enough to check that if a sequence $(h_n,
  n\in \N)$ converges to $g$ in $L^p(\mu)$ and to $f$ in $L^r(\nu)$, then $g=f$ a.e.  This
  is immediate: for such a sequence, one can extract a sub-sequence which converges to $g$
  a.e.\ and to $f$ a.e.
\end{proof}

\subsection{The effective reproduction number~%
  \texorpdfstring{$R_e$}{Re}}\label{sec:weak-topo}

For  $p\in  [1,  +\infty  )$  and~$\eta\in  \Delta$  the  multiplication
operator~$M_\eta$ is bounded, and if $T$  is a compact operator on $L^p$
then so is  $ T M_\eta$.  Following~\cite{ddz-theo}  where only integral
operators where considered, and keeping similar notations, we define the
\emph{reproduction number}  associated to the positive  compact operator
$T$ (on $L^p$ for some $p\in [1, +\infty )$) as its spectral radius:
\begin{equation}\label{eq:def-R0}
  R_0[T]=\rho(T),
\end{equation}
the \emph{effective spectrum} function~$\spec[T]$ from~$\Delta$ to~$\ck$ by:
\begin{equation}\label{eq:def-sigma_e}
  \spec[T](\eta)=\spec(T M_\eta), 
\end{equation}
and the \emph{effective reproduction number} function $R_e[T]=\mathrm{rad}\circ \spec[T]$
from~$\Delta$ to~$\R_+$ by:
\begin{equation}\label{eq:def-R_e}
  R_e[T](\eta)=\mathrm{rad}( \spec(TM_\eta))=\rho(TM_\eta). 
\end{equation}
Take care that:
\[
  \spec(T)=\spec[T](\un)\quad\text{and}\quad R_0[T]=R_e[T](\un).
\]
When there is no risk of confusion on the positive compact operator $T$, we simply write $R_e$ and $R_0$
for the function~$R_e[T]$ and the number~$R_0[T]$.
We have the following immediate properties for the function $R_e[T]$
(use Lemma~\ref{lem:prop-spec-mult}~\ref{item:A-B}
for the third property).

\begin{proposition}[Elementary properties of $R_e$]\label{prop:R_e}
The function~$R_e=R_e[T]$, where $T$ is a  positive compact operator on
$L^p$ with $p\in [1, +\infty )$ satisfies the
following properties:
  \begin{enumerate}[(i)]
  \item\label{prop:a.e.-Re}%
    $R_e(\eta_1)=R_e(\eta_2)$ if~$\eta_1=\eta_2,\, \mu\as$, and~$\eta_1, \eta_2\in
    \Delta$,
  \item\label{prop:min_Re}%
    $R_e(\zero) = 0$ and~$R_e(\un) = R_0$,
  \item\label{prop:increase}%
    $R_e(\eta_1) \leq R_e(\eta_2)$ for all~$\eta_1, \eta_2\in \Delta$ such
    that~$\eta_1\leq \eta_2$,
  \item\label{prop:normal}%
    $R_e(\lambda \eta) = \lambda R_e(\eta)$, for all~$\eta \in \Delta$ and~$\lambda \in
    [0,1]$.
  \end{enumerate}
\end{proposition}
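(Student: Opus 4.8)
The plan is to verify the four assertions one at a time directly from the definition $R_e(\eta)=\rho(TM_\eta)$ in~\eqref{eq:def-R_e}, using only the homogeneity of the spectral radius, the identification of $\mu$-a.e.\ equal functions in $L^p$, and the comparison principle for positive operators recorded in Lemma~\ref{lem:prop-spec-mult}~\ref{item:A-B}. For~\ref{prop:a.e.-Re}, I would observe that if $\eta_1=\eta_2$ holds $\mu$-a.e., then $\eta_1$ and $\eta_2$ are the same element of $L^\infty$, so $M_{\eta_1}=M_{\eta_2}$ as bounded operators on $L^p$; hence $TM_{\eta_1}=TM_{\eta_2}$ and the two operators have equal spectral radius. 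For~\ref{prop:min_Re}, I would simply evaluate the definition at the two constant functions: since $M_\zero=0$ we get $R_e(\zero)=\rho(0)=0$, and since $M_\un=\mathrm{Id}$ we get $R_e(\un)=\rho(T)=R_0$, in agreement with the identity $R_0[T]=R_e[T](\un)$ already noted above.

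The property requiring a previous result is the monotonicity~\ref{prop:increase}. Here I would set $A=TM_{\eta_2}$ and $B=TM_{\eta_1}$ and check the hypotheses of Lemma~\ref{lem:prop-spec-mult}~\ref{item:A-B} on the Banach lattice $E=L^p$. Each of $\eta_1,\eta_2$ is non-negative, so the multiplication operators $M_{\eta_1}$ and $M_{\eta_2}$ are positive; composing with the positive operator $T$ shows that $A$ and $B$ are positive. For the difference I would write $A-B=T(M_{\eta_2}-M_{\eta_1})=TM_{\eta_2-\eta_1}$; since $\eta_1\le\eta_2$, the function $\eta_2-\eta_1$ is non-negative and bounded, so $M_{\eta_2-\eta_1}$ is a positive bounded operator and $A-B$ is positive as well. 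The lemma then yields $\rho(A)\ge\rho(B)$, that is $R_e(\eta_2)\ge R_e(\eta_1)$.

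Finally, for~\ref{prop:normal}, I would note that for $\lambda\in[0,1]$ the function $\lambda\eta$ still lies in $\Delta$ and that $M_{\lambda\eta}=\lambda M_\eta$, so that $TM_{\lambda\eta}=\lambda\,TM_\eta$; applying the homogeneity $\rho(\lambda S)=|\lambda|\,\rho(S)=\lambda\rho(S)$, valid since $\lambda\ge 0$, gives $R_e(\lambda\eta)=\lambda R_e(\eta)$. None of these steps presents a genuine difficulty; the only point requiring care is the positivity of $A-B$ in~\ref{prop:increase}, which is why one rewrites the difference as $TM_{\eta_2-\eta_1}$ before invoking Lemma~\ref{lem:prop-spec-mult}~\ref{item:A-B}.
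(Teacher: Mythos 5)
Your proof is correct and follows exactly the route the paper intends: the paper states these properties as immediate from the definition $R_e(\eta)=\rho(TM_\eta)$, citing Lemma~\ref{lem:prop-spec-mult}~\ref{item:A-B} only for the monotonicity property~\ref{prop:increase}, which is precisely how you handle it (including the key rewriting $A-B=TM_{\eta_2-\eta_1}$ to verify positivity of the difference). Nothing to add.
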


We shall use the following continuity property of the spectrum; see also
\cite[Proposition~3.6]{ddz-theo} for stronger results when considering integral operators
and the weak topology on $\Delta$.

\begin{lemma}[Continuity of the spectrum]
  \label{lem:cont-spec}
  Let   $T$   be   a   compact   operator   on   $L^p$   with
  $p\in [1, +\infty )$. Let $(v_n, n\in \N)$ and $(w_n, n\in \N)$ be two
  bounded sequences in $L ^\infty $ which converge respectively to $v_\infty $ and $w_\infty $,
  and let $T_n=M_{v_n}\, T \, M_{w_n}$.
  Then  for any $\eta\in\Delta$, as $n$ goes to infinity, we have that:
  \begin{enumerate}[(i)]
  \item $\spec[T_n](\eta) $ converges to $\spec[T_\infty ](\eta) $ in $\ck$,
  \item $R_e[T_n](\eta)$ converges to $R_e[T_\infty ](\eta)$ in $\R$,
  \item for   any   $\lambda\in\spec(T_\infty M_\eta)\cap \C^*$ and any $r>0$
    such that $\lambda'\in \spec(T_\infty M_\eta)$ and
    $|\lambda-\lambda'|\leq r$ implies $\lambda=\lambda'$, then for
    all $n$ large enough:
    \begin{equation}
      \label{eq:collectK-mult-cv}
      \mult(\lambda, T_\infty M_\eta)= \sum_{\lambda'\in
      \spec(T_n M_\eta),\, |\lambda-\lambda'|\leq r} \mult(\lambda', T_nM_\eta).
  \end{equation}
  \end{enumerate}
\end{lemma}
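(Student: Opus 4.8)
The plan is to recognize that, writing $A_n = T_n M_\eta$ on $L^p$ with candidate limit $A_\infty = T_\infty M_\eta$, the three assertions are exactly the conclusions of Lemma~\ref{lem:prop-spec-mult}~\ref{item:collectK-cv} once its two hypotheses are checked: strong convergence $A_n \to A_\infty$ and collective compactness of $\{A_n : n \in \N\}$. Since multiplication operators compose multiplicatively, I would fold $\eta$ into the right-hand multiplier and write $A_n = M_{v_n}\, T\, M_{\tilde w_n}$ with $\tilde w_n = w_n \eta$ (and similarly $A_\infty = M_{v_\infty}\, T\, M_{\tilde w_\infty}$). As $\eta \in \Delta$ is bounded by $1$, the sequence $(\tilde w_n, n \in \N)$ stays bounded in $L^\infty$ and converges $\mu$-a.e.\ to $\tilde w_\infty$ (I read the stated convergence of $v_n, w_n$ as $\mu$-a.e.); set $M = \sup_n(\norm{v_n}_\infty + \norm{\tilde w_n}_\infty) < +\infty$. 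Granting the two hypotheses, assertion~(i) is the convergence $\spec(A_n) = \spec[T_n](\eta) \to \spec[T_\infty](\eta) = \spec(A_\infty)$ in $(\ck, d_\mathrm{H})$, assertion~(ii) is the convergence $\rho(A_n) = R_e[T_n](\eta) \to R_e[T_\infty](\eta)$, and assertion~(iii) is the multiplicity identity~\eqref{eq:collectK-mult} specialized to $A_n$, namely~\eqref{eq:collectK-mult-cv}.

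For strong convergence I would fix $f \in L^p$, set $g = \tilde w_\infty f$ and $h = Tg$, and use the splitting
\[
  A_n f - A_\infty f = v_n\, T\big( (\tilde w_n - \tilde w_\infty) f \big) + (v_n - v_\infty)\, h .
\]
The function $(\tilde w_n - \tilde w_\infty) f$ tends to $0$ $\mu$-a.e.\ and is dominated by $2M\abs{f} \in L^p$, so by dominated convergence (where $p < +\infty$ is essential) it tends to $0$ in $L^p$; boundedness of $T$ and the uniform bound on the multipliers $M_{v_n}$ then kill the first term. Likewise $(v_n - v_\infty) h \to 0$ $\mu$-a.e.\ with domination $2M\abs{h} \in L^p$, so the second term vanishes in $L^p$.

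The hard part will be the collective compactness of $\{A_n\}$, because the outer multipliers $M_{v_n}$ vary with $n$. Here the uniform bound on the multipliers gives $\bigcup_n M_{\tilde w_n}(B) \subset M B$, with $B$ the unit ball of $L^p$, and compactness of $T$ makes $C = \overline{T(M B)}$ a compact set containing every $T M_{\tilde w_n} f$ with $\norm{f}_p \leq 1$; it thus suffices to show that $S = \{ M_{v_n} g : n \in \N,\, g \in C\}$ is relatively compact. Given a sequence $(M_{v_{n_k}} g_k)$ in $S$, either some index is repeated infinitely often, in which case compactness of $M_{v_n}(C)$ for that fixed $n$ yields a convergent subsequence, or $n_k \to \infty$; in the latter case I extract $g_{k_j} \to g \in C$ in $L^p$ and combine $\norm{M_{v_{n_{k_j}}}(g_{k_j} - g)}_p \leq M \norm{g_{k_j} - g}_p \to 0$ with $v_{n_{k_j}} g \to v_\infty g$ in $L^p$ (a.e.\ convergence along the subsequence, domination by $M\abs{g}$, dominated convergence) to conclude $M_{v_{n_{k_j}}} g_{k_j} \to v_\infty g$. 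This proves $S$, and hence $\{A_n\}$, relatively compact.

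With both hypotheses in hand, Lemma~\ref{lem:prop-spec-mult}~\ref{item:collectK-cv} applied to $(A_n)$ delivers the convergence of the spectra, the convergence of the spectral radii, and the multiplicity identity; unwinding the definitions~\eqref{eq:def-sigma_e} and~\eqref{eq:def-R_e} turns these into assertions~(i), (ii) and (iii). I expect the only genuine difficulty to be the doubly-indexed extraction in the collective compactness step, where both the multiplier index $n$ and the point $g$ of the compact set $C$ move at once; everything else is a routine dominated-convergence argument made available by the standing hypothesis $p < +\infty$.
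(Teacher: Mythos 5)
Your proof is correct, but it takes a genuinely different route for the key compactness step. The paper first uses the similarity invariance of the spectrum and of multiplicities, Lemma~\ref{lem:prop-spec-mult}~\ref{item:spec-adjoint-mult} (\emph{i.e.}\ $\spec(AB)\cap\C^*=\spec(BA)\cap\C^*$ with matching multiplicities), to replace $T_nM_\eta=M_{v_n}TM_{\eta w_n}$ by $T'_n=TM_{\eta v_n w_n}$, so that only a \emph{right} multiplier varies; collective compactness of $\{TM_h\,\colon\, h\in\Delta'\}$ for a bounded family $\Delta'\subset L^\infty$ is then immediate from the cited result of Anselone, and strong convergence follows from one application of dominated convergence. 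You instead keep the two-sided form $A_n=M_{v_n}TM_{w_n\eta}$ and prove collective compactness of $\{A_n\}$ by hand, via the compact set $C=\overline{T(MB)}$ and a subsequence extraction in which both the multiplier index and the point of $C$ move. Your dichotomy (an index repeated infinitely often versus $n_k\to\infty$) is exhaustive, and your argument correctly exploits the convergence $v_n\to v_\infty$ --- which is essential, since $\{M_vTM_w\}$ over an arbitrary bounded family of outer multipliers $v$ is \emph{not} collectively compact in general. What the paper's reduction buys is that this delicate step disappears entirely (at the cost of invoking the spectral-invariance lemma to transfer the conclusions back); what your approach buys is that the conclusions of Lemma~\ref{lem:prop-spec-mult}~\ref{item:collectK-cv} apply directly to $T_nM_\eta$ with no translation step. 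Your reading of the hypothesis ``converge'' as $\mu$-a.e.\ convergence of uniformly bounded sequences is the right one (it is what the paper's own use of dominated convergence, and the application with $v_n=\ind{\{n\geq h\geq 1/n\}}$ in Lemma~\ref{lem:hk/h}, require), and your strong-convergence splitting is the same dominated-convergence argument as the paper's, merely performed in two terms rather than one.
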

\begin{proof}
  Set $T'_n=T M_{\eta v_n w_n}$ for $n\in \bar \N$,  where  $\bar  \N=\N
  \cup\{+\infty  \}$. 
  Using  Lemma~\ref{lem:prop-spec-mult}~\ref{item:spec-adjoint-mult}
  for the second equality,
  we have that for $n\in \bar \N$:
  \[
    \spec[T_n](\eta)=\spec(M_{v_n} T M_{\eta w_n})
    =\spec(T M_{\eta v_n w_n})=\spec(T'_n),
  \]
  and  similarly for  the  multiplicity.  Notice  the  set of  functions
  $\Delta'=\{ \eta v_n  w_n\, \colon\, \eta\in \Delta \text{  and } n\in
    \N\}$  is  bounded  in
  $L^\infty   $   and  thus   the   set   of  multiplication   operators
  $\{M_h\, \colon\, h\in \Delta'\}$ is bounded in $\cll(L^p)$. We deduce
  from      \cite[Proposition~4.2]{anselone}      that      the      set
  $\{TM_h\, \colon\, h\in \Delta'\}$ is collectively compact. In
  particular, the sequence $(T'_n, n\in  \N)$ is collectively compact. 

Let $h\in L^p$, we have $\norm{T'_\infty  h -T'_n h}_p\leq  \norm{T}_{L^p}
\norm{(v_\infty  w_\infty  -v_n w_n)h}_p$. Then, use dominated
convergence to get that $\lim_{n\rightarrow \infty } \norm{(v_\infty
  w_\infty  -v_n w_n)h}_p=0$. This implies that the sequence $(T'_n, n\in \N)$
  converges strongly to $T'_\infty $. 
Then use 
  Lemma~\ref{lem:prop-spec-mult}~\ref{item:collectK-cv} to
  conclude.
\end{proof}

\begin{remark}[On integral operators]\label{rem:op-noyau}
  Consider the  positive integral operator defined by:
  \begin{equation}
    \label{eq:def-op-int}
    T_\kk ( g)(x) = \int_\Omega \kk(x,y) \, g(y) \,\mathrm{d}\mu(y),
  \end{equation}
  where  $\kk$ is a  kernel on $\Omega$, that is a non-negative measurable function defined
  on $\Omega\times \Omega$. Under the hypothesis that  $\kk$ has a finite double norm in
  $L^p$ for some $p\in [1, +\infty )$, that is:
  \begin{equation}\label{eq:norm-pq}
    \norm{\kk}_{p,q}^p=\int_\Omega\left( \int_\Omega \abs{\kk(x,y)}^q\,
    \mu(\mathrm{d}y)\right)^{p/q} \mu(\mathrm{d}x)
  \end{equation}
  is finite with~$q=p/(p-1)$, the operator $T_\kk$ is  compact if $p>1$ and  $T_\kk^2$
  compact  if $p=1$; see~\cite[p.\ 293]{grobler}.   When $p>1$, one gets  stronger results
  on the  continuity  of  the  function $R_e[T_\kk]$;  see  Theorem  3.5  and Proposition 
  3.6  in~\cite{ddz-theo}   (where  $R_e[T_\kk]$  is  denoted $R_e[\kk]$ therein).
\end{remark}

We conclude this section with a remark on the definition of the operator 
$M_fTM_g$ when $T$ is a positive operator and $f$ and $g$ are
non-negative measurable function.

 \begin{remark}[On $M_f\, T\, M_g$]\label{rem:def-fTg}
   Let  $T$ be  a  positive  compact operator  on  $L^p(\mu)$ for  some $p\in [1, +\infty
   )$ and  $f,g$ be non-negative measurable functions defined on $\Omega$.  If the
   functions  $f, g$ are bounded, then the operator  $M_fTM_g$ is  a positive  compact
   operator  on $L^p(\mu)$. Motivated by  Example~\ref{ex:model=SIS2},
   we  shall however  be interested  in considering possibly unbounded functions $f$ and
   $g$. In this case, the operator $TM_g$ is      a     positive     compact      operator
   from $E=L^p((1+g)^p \, \rd  \mu)$ to $L^p(\mu)$, and thus  $M_fTM_g$ is a positive 
   compact       operator     from  $E$       to $E'=L^p((1+f)^{-1} \, \rd \mu)$.  Let
   $r\in [1, +\infty )$ and $\nu$ a  $\sigma$-finite  measure   mutually  absolutely 
   continuous  with $\mu$. Taking $F=E$  or $E'$, and using the compatibility between $F$ 
   and $L^r(\nu)$ given by Lemma~\ref{lem:Lp-compatible},   we  deduce  that there
   exists  at most  a unique  continuous  extension of  $M_fTM_g$ as  a bounded  operator
   on  $L^r(\nu)$,  which we  shall  still denote  by $M_fTM_g$.  By construction, this
   extension, when it exists, is also positive. However, let  us stress that it is not
   compact \emph{a priori}.
 \end{remark}

\section{Spectrum-preserving transformations}
\label{sec:equivalent}

In this section, we consider a  measured space $(\Omega, \cf, \mu)$ with
$\mu$ a non-zero $\sigma$-finite measure,  and we discuss two operations
on the positive compact operator $T$ which leave invariant the functions
$\spec[T]$ and  $R_e[T]$ defined on  $\Delta$. Recall the  discussion on
the operator $M_f T M_g$ from Remark~\ref{rem:def-fTg}.

\begin{lemma}\label{lem:hk/h}
     Let $T$ be a positive  compact operator on $L^{p}$ for some $p\in [1,
    +\infty )$ and $h$ be a
   measurable non-negative function defined on $\Omega$. 
 
  \begin{enumerate}[(i)]
  \item\label{lem:hk=kh} If $M_hT$ and $TM_h$ are positive compact
    operators (respectively on $L^r$ and $L^s$ with $r,s\in [1, +\infty
    )$ possibly distinct), then we have:
    \begin{align*}
      \spec[M_h T]= \spec[M_h T M_{\set{h>0}}] &=
      \spec[M_{{\set{h>0}}}\, T M_h] = \spec[TM_h],\\
      R_e[M_hT]=R_e[M_hTM_{{\set{h>0}}}] &=
      R_e[M_{{\set{h>0}} } \,TM_h]=R_e[TM_ h]. 
    \end{align*}
  \item\label{lem:k=hk/h} If $h$ is positive and if $M_h\, T \, M_{1/h}$
    is a  positive compact
    operators (on some  $L^r$  with $p,r\in [1, +\infty
    )$ possibly distinct),
     then we have:
     \[
       \spec[T]= \spec[M_h\, T \, M_{1/h}] \quad\text{and}\quad
      R_e[T]=R_e[M_h\, T \, M_{1/h}].
    \]
  \item\label{lem:k=ktop} The adjoint
    operator $T^\top$ is a   positive compact
    operator on $L^q$, with $q=p/(p-1)$ and  we have:
    \[
      \spec[T]=\spec[T^\top] \quad \text{and} \quad R_e[T] =
      R_e[T^\top].
    \]
  \end{enumerate} 
\end{lemma}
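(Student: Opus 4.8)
The plan is to reduce all three parts to the two spectral identities of Lemma~\ref{lem:prop-spec-mult}\ref{item:spec-adjoint-mult}: the commutation property $\spec(AB)=\spec(BA)$ (with equal multiplicities away from $0$) and the adjoint invariance $\spec(A)=\spec(A^\top)$. In each case it is enough to establish the stated equalities for the effective spectrum $\spec[\cdot]$, since $R_e=\mathrm{rad}\circ\spec[\cdot]$ and the equalities for $R_e$ then follow by applying $\mathrm{rad}$; in fact, because $\mathrm{rad}$ only sees the moduli, matching the \emph{nonzero} spectra already suffices for the $R_e$ part, the value $0$ mattering only for the full $\spec[\cdot]$ identities. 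The recurring algebraic facts are $M_aM_b=M_{ab}$ together with, writing $g=\ind{\set{h>0}}$, the identities $M_hM_g=M_gM_h=M_h$ (since $hg=h$) and $M_{1/h}M_h=M_hM_{1/h}=\mathrm{Id}$ when $h>0$.

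Part~\ref{lem:k=ktop} is the cleanest and requires no boundedness. The adjoint $T^\top$ of a positive compact operator on $L^p$ is again positive (positivity of the order-adjoint on the Banach lattice $L^q$) and compact (Lemma~\ref{lem:prop-spec-mult}\ref{item:spec-adjoint-mult}). Fix $\eta\in\Delta$; then $M_\eta$ is bounded and self-adjoint, so $(TM_\eta)^\top=M_\eta T^\top$, and
\[
  \spec(TM_\eta)=\spec\big((TM_\eta)^\top\big)=\spec(M_\eta T^\top)=\spec(T^\top M_\eta),
\]
using adjoint invariance and then commutation. This is exactly $\spec[T](\eta)=\spec[T^\top](\eta)$, and taking $\mathrm{rad}$ gives $R_e[T]=R_e[T^\top]$.

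For Parts~\ref{lem:hk=kh} and~\ref{lem:k=hk/h} the idea is to cyclically permute the multiplication operators past $T$. When $h$ is bounded (and, for Part~\ref{lem:k=hk/h}, also bounded away from $0$) all operators involved are bounded and the commutation identity applies directly. For Part~\ref{lem:hk=kh}, fixing $\eta\in\Delta$ and applying commutation to the compact operator $TM_\eta$ and to $M_h$ gives $\spec(M_hTM_\eta)=\spec(TM_{\eta h})$; likewise $\spec(M_hTM_{g\eta})=\spec(TM_{g\eta h})=\spec(TM_{\eta h})$ and $\spec(M_gTM_{h\eta})=\spec(TM_{h\eta g})=\spec(TM_{h\eta})$, using $gh=h$. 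Thus all four operators share the spectrum $\spec(TM_{\eta h})$, which proves the identities. For Part~\ref{lem:k=hk/h}, with $h$ bounded and bounded away from $0$ the operator $M_h$ is bounded and invertible with inverse $M_{1/h}$, and
\[
  (M_hTM_{1/h})M_\eta=M_hTM_{\eta/h}=M_h\,(TM_\eta)\,M_{1/h}
\]
is an honest similarity of $TM_\eta$, so the full spectra coincide. (Alternatively, one may take $A=TM_{\eta/h}$, $B=M_h$, whence $AB=TM_\eta$ and $BA=(M_hTM_{1/h})M_\eta$.)

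The main obstacle is removing the boundedness assumptions, which the algebra above silently uses, together with the bookkeeping at $\lambda=0$. For the point $0$: once each product is identified as a compact operator, $0$ lies in every such spectrum in infinite dimension, hence is automatically common; in finite dimension one argues via $\det$ exactly as in the proof of Lemma~\ref{lem:prop-spec-mult}. For the unboundedness of $h$ (or $1/h$), each product $M_fTM_g$ must be read in the sense of Remark~\ref{rem:def-fTg}, as the unique positive extension, assumed here to be compact, to the relevant $L^r(\mu)$; since all measures are the fixed $\mu$, the spaces $L^p(\mu),L^r(\mu),L^s(\mu),L^q(\mu)$ are pairwise compatible by Lemma~\ref{lem:Lp-compatible} and the realizations are consistent, so Lemma~\ref{lem:prop-spec-mult}\ref{item:density-mult} permits computing the spectrum and the nonzero multiplicities on any one of them. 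The commutation and similarity steps are then recovered for general $h$ by truncating, say $h_n=\min(\max(h,1/n),n)$, applying the bounded case to $h_n$, and passing to the limit. This passage is the delicate point: since $h_n\to h$ need not be bounded in $L^\infty$, the norm-continuity of Lemma~\ref{lem:cont-spec} does not apply directly, and the limit must instead be controlled through the collective compactness and the consistency transfer, which is where the real technical work lies.
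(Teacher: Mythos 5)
Your part~(iii) and the bounded cases of~(i) and~(ii) coincide with the paper's argument: reduce everything to the effective spectrum and cycle the multiplication operators through $T$ using \eqref{eq:r(AB)-mult} and \eqref{eq:adjoint-mult}. The genuine gap is the unbounded case, which you correctly flag as ``where the real technical work lies'' but then do not carry out --- and the route you sketch (truncate $h$ to $h_n=n^{-1}\vee(h\wedge n)$, apply the bounded case, ``pass to the limit'') runs exactly into the obstruction you yourself name: the multipliers $h_n$ and $1/h_n$ are not uniformly bounded in $L^\infty$, so Lemma~\ref{lem:cont-spec} gives no convergence of $\spec(M_{h_n}TM_{1/h_n}M_\eta)$, and it is not even clear in what sense these approximants converge to $M_hTM_{1/h}$.

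The paper's resolution is a different choice of approximants that avoids any limit in the $h$-variable. Set $v_n=\ind{\{n\geq h\geq 1/n\}}$ and compress \emph{both} operators by this indicator: $T_n=M_{v_n}TM_{v_n}$ on $L^p$ and $T'_n=M_{v_n}T'M_{v_n}$ on $L^r$, where $T'=M_hTM_{1/h}$. Since $0\leq v_n\leq 1$ and $v_n\to\un$ a.e., Lemma~\ref{lem:cont-spec} applies to both sequences and yields $\spec(T_nM_\eta)\to\spec(TM_\eta)$ and $\spec(T'_nM_\eta)\to\spec(T'M_\eta)$. The unboundedness of $h$ now only enters at \emph{fixed} $n$: on $\{1/n\leq h\leq n\}$ the function $h$ is bounded and bounded away from $0$, so the bounded case gives $\spec(T_nM_\eta)=\spec(T''_nM_\eta)$ with $T''_n=M_{v_nh_n}TM_{v_n/h_n}$ acting on $L^p$, and $T''_n$ is consistent with $T'_n$, whence $\spec(T''_nM_\eta)=\spec(T'_nM_\eta)$ by Lemma~\ref{lem:Lp-compatible} and Lemma~\ref{lem:prop-spec-mult}~\ref{item:density-mult}. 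Combining the four identities at each $n$ and letting $n\to\infty$ closes the argument. Without this (or an equivalent) device, your proof of~(i) and~(ii) for unbounded $h$ is incomplete.
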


Let us stress   that the compactness hypothesis of $T$  can be removed in
the  statement  of~\ref{lem:hk=kh}.   Even   if  \ref{lem:k=hk/h}  is  a
consequence    of    \ref{lem:hk=kh},    we    state    it    separately
since~\ref{lem:k=hk/h}  and~\ref{lem:k=ktop} describe  two modifications
of~$T$  that  leave  the  functions~$R_e$  and  $\spec$  invariant.  See
Remark~\ref{rem:tilde-T} and Lemma~\ref{lem:Sp=DiagSym}
for   other  transformations on  the operators
which  leaves  the functions  $R_e$  and  $\spec$ invariant.   See  also
\cite{ddz-pres} for further results in the finite dimensional case.
 
\begin{proof}
  Since   $R_e=\mathrm{rad}\circ  \spec$,   we   only   need  to   prove
  \ref{lem:hk=kh}-\ref{lem:k=ktop}  for the  function $\spec$.   We give
  a   detailed  proof   of~\ref{lem:k=hk/h}   and   leave  the   proof
  of~\ref{lem:hk=kh}, which  is very  similar, to  the reader.  We first
  assume that $T$  is a positive compact operator on  $L^p$, and $h$ and
  $1/h$ are bounded.  The operators $TM_{\eta}$ and $M_h  T M_{\eta /h}$
  and  the  multiplication operators  $M_h$  and  $M_{1/h}$ are  bounded
  operators on $L^p$. We have, using  that $ T \, M_{\eta/h}$ is compact
  and~\eqref{eq:r(AB)-mult} for the second equality:
  \[
    \spec(T M_{\eta}) = \spec (T M_{ \eta/h}\, M_h) = \spec(M_h T M_{ \eta/h}).
  \]
  Since $\eta \in \Delta$ is arbitrary, this gives that
  $\spec[T]=\spec[M_h T M_{1 /h}]$.
  \medskip

  In the general case, we use  an approximation scheme.  Assume that $T$
  and  $T'=M_hTM_{1/h}$ are  positive  compact  operators (respectively  on
  $L^p$ and  $L^r$ with  $p,r\in [1,  +\infty )$) and  $h$ is  a positive
  function.  For $n\in \N^*$, set:
  \[
    v_n=\ind{\{n\geq h\geq 1/n\}}
    \quad\text{and}\quad
    h_n=n^{-1}
    \vee (h\wedge n).
  \]
  Notice that $T_n= M_{v_n} T M_{v_n}$ and $T''_n=M_{v_n h_n} T
  M_{v_n/h_n}$ are positive  compact  operators on $L^p$. 
   Let $\eta\in \Delta$. From the first part of the proof, we get that:
  \[
    \spec(T_n M_{\eta})=\spec(T''_n M_\eta).
  \]
  Consider also the   positive  compact  operator on $L^r$ defined by
  $T'_n=  M_{v_n} T'  M_{v_n}$.
  Since the sequence $(v_n, n\in \N^*$) converges in $L^\infty $ to $\un$, we
  deduce from Lemma~\ref{lem:cont-spec} that:
  \[
    \lim_{n\rightarrow \infty } \spec(T_n M_\eta)=\spec(T M_\eta)
    \quad\text{and}\quad
    \lim_{n\rightarrow \infty } \spec(T'_n M_\eta)=\spec(T' M_\eta). 
  \]
  Since $L^p $ and $L^r$ are compatible, see
  Lemma~\ref{lem:Lp-compatible} and the compact operators $T'_n$ and $T''_n$ are
  consistent,  we deduce from
Lemma~\ref{lem:prop-spec-mult}~\ref{item:density-mult} that:
\[
  \spec(T''_n M_\eta)=\spec(T'_n M_\eta).
\]
In conclusion, we obtain  that $  \spec(T M_\eta)=\spec(T' M_\eta)$ and
thus $  \spec[T]=\spec[T']$. 

 \medskip

 We now prove \ref{lem:k=ktop}. Notice that $TM_\eta$ and $(TM_\eta)^\top$
 are compact operator. 
 We have:
 \[
   \spec(T^\top M_\eta)=\spec(M_\eta T^\top)=
   \spec((TM_\eta)^\top)=\spec(T M_\eta), 
 \]
 where we used 
\eqref{eq:r(AB)-mult} for the first equality,
and~\eqref{eq:adjoint-mult} for the second. Since this is true for any
$\eta\in \Delta$, this gives   $\spec[T^\top] = \spec[T]$.
\end{proof}


\begin{remark}[Multiplicity of the eigenvalues]\label{rem:hk/h-mult}
  Following closely the proof of Lemma~\ref{lem:hk/h}~\ref{lem:k=hk/h},
  we also get  under the assumption of
  Lemma~\ref{lem:hk/h}~\ref{lem:k=hk/h} that:
  \begin{equation}\label{eq:mult:k=hk/h}
    \mult(\lambda, T)=\mult (\lambda, M_h T M_{1/h}) \quad\text{ for all $\lambda\in
    \C^*$.}
  \end{equation}
\end{remark}

\section{Sufficient conditions for convexity or concavity
  of~\texorpdfstring{$R_e$}{Re}}\label{sec:Hill_Longini}

\subsection{A conjecture from Hill and Longini}
\label{sec:comparison_Hill_Longini}

Recall  that,  in the  metapopulation  framework  with $N$  groups,  the
effective reproduction  number is  equal to the  spectral radius  of the
matrix $K\cdot\Diag(\eta)$,  where the  next-generation matrix $K$  is a
$N\times     N$     matrix     with     non-negative     entries     and
$\eta\in  \Delta=[0,  1]^N$  is  the  vaccination  strategy  giving  the
proportion  of non-vaccinated  people in  each groups.   Hill and Longini
conjecture in~\cite{hill-longini-2003}  conditions on
the spectrum of the next-generation matrix that should imply  convexity or
 concavity of  the effective reproduction number. The conjecture  states that the
function $R_e[K]$ is:
\begin{enumerate}[(i)]
\item\label{item:cvx-K} convex when $\spec(K) \subset \R_+$,
\item\label{item:cave-K} concave when $\spec(K) \backslash \{ R_0 \} \subset \R_-$.
\end{enumerate}

It turns out that the conjecture cannot be true without additional assumptions on the
matrix~$K$. Indeed, consider the following next-generation matrix:
\begin{equation}\label{eq:next-gen-counter-conv}
  K =
  \begin{pmatrix}
    16 & 12 & 11 \\
    1 & 12 & 12 \\
    8 & 1 & 1
  \end{pmatrix}.
\end{equation}
Its eigenvalues are approximately equal to $24.8$, $2.9$ and $1.3$. Since $R_e$ is
homogeneous, the function is entirely determined by the value it takes on the plane $\{
\eta \, \colon \, \eta_1 + \eta_2 + \eta_3 = 1/3 \}$. The graph of the function $R_e$
restricted to this set has been represented in Figure~\ref{fig:counter-graph-conv}. The
view clearly shows the saddle nature of the surface. Hence, the Hill-Longini conjecture
\ref{item:cvx-K} is contradicted in its original formulation. 

In the same manner, the eigenvalues of the following next-generation matrix:
\begin{equation}\label{eq:next-gen-counter-conc}
  K = 
  \begin{pmatrix}
    9 & 13 & 14 \\
    18 & 6 & 5 \\
    1 & 6 & 6
  \end{pmatrix}
\end{equation}
are approximately equal to $26.3$, $-1.4$ and $-3.9$. Thus, $K$ satisfies the condition
that should imply the concavity of the effective reproduction number in the Hill-Longini
conjecture~\ref{item:cave-K}. However, as we can see in
Figure~\ref{fig:counter-graph-conc}, the function $R_e$ is neither convex nor concave.

\begin{figure}
  \begin{subfigure}[T]{.5\textwidth} \centering
    \includegraphics[page=3]{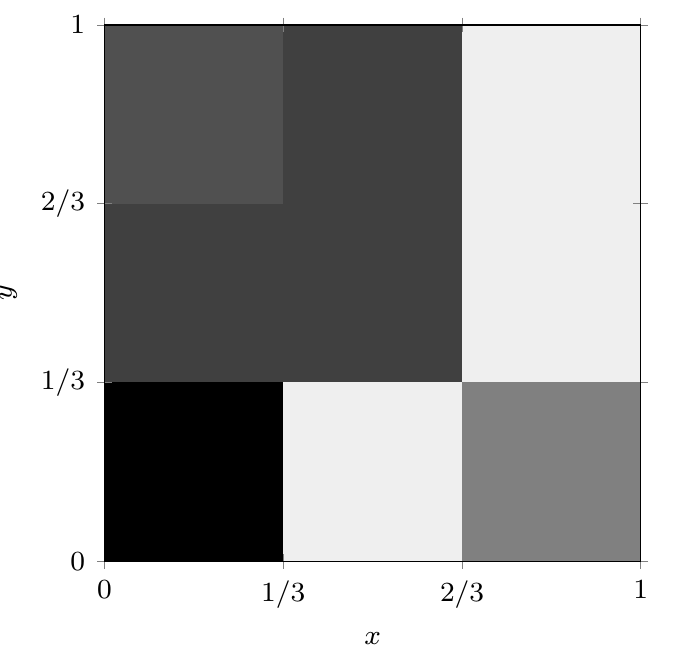}
    \caption{Convex case: $K$ given by~\eqref{eq:next-gen-counter-conv}.}
    \label{fig:counter-graph-conv}
  \end{subfigure}%
  \begin{subfigure}[T]{.5\textwidth} \centering
    \includegraphics[page=4]{counter-examples}
    \caption{Concave case: $K$ given by~\eqref{eq:next-gen-counter-conc}.}
  \label{fig:counter-graph-conc}
  \end{subfigure}%
  \caption{Counter-example of the Hill-Longini  conjecture.  The plan of
    strategies    $P=\{\eta\,   \colon\,    \eta_1+\eta_2+\eta_3=1/3\}$   is
    represented as a gray  surface. The triangulated surface corresponds
    to  the  graph  of  $\eta\mapsto R_e[K](\eta)$  restricted to $P$.}
  \label{fig:counter-conc}
\end{figure}

Despite these  counter-examples, the  Hill-Longini conjecture  is indeed
true   when   making   further   assumption   on   the   next-generation
matrix.   Let~$M$  be   a  square   real  matrix.   The  matrix~$M$   is
\emph{diagonally  similar}  to  a  matrix~$M'$ if  there  exists  a  non
singular  real   diagonal  matrix~$D$   such  that~$M=D\cdot   M'  \cdot
D^{-1}$. The matrix~$M$ is said to be \emph{diagonally symmetrizable} or
simply \emph{symmetrizable} if  it is diagonally similar  to a symmetric
matrix, or,  equivalently, if $M$  admits a decomposition~$M=D  \cdot S$
(or~$M=S\cdot D$), where~$D$ is a diagonal matrix with positive diagonal
entries and~$S$  is a symmetric  matrix. If  a matrix $M$  is diagonally
symmetrizable,  then its  eigenvalues  are real  since similar  matrices
share  the  same spectrum.  We  obtain  the  following result  when  the
next-generation  matrix  is  symmetrizable   as  a  particular  case  of
Theorem~\ref{th:hill-longini} below.

\begin{theorem}\label{th:hill-longini-meta}
  Let  $K$  be  a  diagonally  symmetrizable  $N\times  N$  matrix  with
  non-negative entries,  and consider the function  $R_e=R_e[K]$ defined
  on $\Delta=[0, 1]^N$.
  \begin{enumerate}[(i)]
  \item\label{pt:conv-meta} If $\spec(K) \subset \R_+$, then the
    function $R_e$ is convex.
  \item If $R_0=R_0[K]$ is a simple eigenvalue of $K$ and
    $\spec(K) \subset \R_-\cup \{ R_0 \} $, then the function
    $R_e$ is concave.
  \end{enumerate}
\end{theorem}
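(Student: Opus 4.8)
The plan is to recognize this finite-dimensional statement as a special case of the operator result Theorem~\ref{th:hill-longini} and to translate the hypotheses accordingly. I would take $\Omega=\{1,\dots,N\}$ equipped with the counting measure $\mu$, so that $L^p(\Omega,\mu)$ coincides with $\R^N$ as a space of functions for every $p\in[1,+\infty)$; it is convenient to work with $p=2$. Every linear map on this finite-dimensional space is compact, and the multiplication operator $M_\eta$ is exactly $\Diag(\eta)$. Hence $K$, viewed as the operator $T$ on $L^2$, satisfies $TM_\eta=K\cdot\Diag(\eta)$, so that $R_e[T](\eta)=\rho(TM_\eta)=\rho(K\cdot\Diag(\eta))=R_e[K](\eta)$ by \eqref{eq:informalPb} and \eqref{eq:def-R_e}; the two functions $R_e$ agree on $\Delta=[0,1]^N$.

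Next I would verify that $T$ meets the three hypotheses of Theorem~\ref{th:hill-longini}. Positivity is immediate since $K$ has non-negative entries and thus maps $\R^N_+$ into itself, and compactness is automatic in finite dimension. For diagonal symmetrizability, I would pick positive numbers $(d_1,\dots,d_N)$ realizing $d_iK_{ij}=d_jK_{ji}$, set $D=\Diag(d_1,\dots,d_N)$, and observe that $T'=D^{1/2}KD^{-1/2}$ is a symmetric matrix, hence a self-adjoint operator on $L^2$. Writing $f(i)=d_i^{-1/2}$ and $g(i)=d_i^{1/2}$, which are positive, bounded, and bounded away from $0$ over the finite set $\Omega$, one gets $K=M_f\,T'\,M_g$, so that Proposition~\ref{prop:sylvester}, together with Definition~\ref{def:diag-sym}, shows $T$ to be diagonally symmetrizable in the operator sense.

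It then remains to convert the spectral sign conditions into the inertia counts $\nega(T)$ and $\pos(T)$. Since $K$ is similar to the symmetric matrix $T'$, its spectrum is real; moreover $R_0=\rho(K)$ is an eigenvalue and equals the largest one by Perron--Frobenius. For part~\ref{pt:conv-meta}, the hypothesis $\spec(K)\subset\R_+$ means that every eigenvalue is non-negative, so no eigenvalue has negative real part and $\nega(T)=0$; the convexity part of Theorem~\ref{th:hill-longini} then applies. For part (ii), I would first note that $R_0>0$: if $R_0=0$, then $\spec(K)\subset\R_-$ together with $\rho(K)=0$ forces all eigenvalues to vanish, so $R_0=0$ would have multiplicity $N\geq 2$, contradicting simplicity. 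Thus $R_0>0$ is the unique eigenvalue with positive real part and it is simple, giving $\pos(T)=1$, and the concavity part of Theorem~\ref{th:hill-longini} applies. The essential work is carried by Theorem~\ref{th:hill-longini}; the only point requiring genuine care here is the dictionary between diagonal symmetrizability of matrices and of operators, and the observation that reality of $\spec(K)$ combined with the Perron--Frobenius position of $R_0$ makes the sign hypotheses coincide precisely with the inertia conditions $\nega(T)=0$ and $\pos(T)=1$.
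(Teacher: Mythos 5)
Your proposal is correct and follows essentially the same route as the paper, which derives Theorem~\ref{th:hill-longini-meta} as a particular case of Theorem~\ref{th:hill-longini} (the dictionary between matrix and operator diagonal symmetrizability being the content of Remark~\ref{rem:op=mat}, which your $M_f\,T'\,M_g$ construction reproves). The only loose thread is the degenerate case $R_0=0$ in part (ii), which your multiplicity argument handles only for $N\geq 2$; but there Corollary~\ref{cor:diag-sym=spec-reel} forces $K=0$ and $R_e\equiv 0$, so concavity is trivial.
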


 The first
point~\ref{pt:conv-meta} has been proved by Cairns in \cite{EpidemicsInHeCairns1989}. In
\cite{Fried80}, Friedland obtained that, if the next-generation matrix $K$ is not singular
and if its inverse is an M-matrix (\emph{i.e.}, its non-diagonal coefficients are
non-positive), then $R_e[K]$ is convex. Friedland's condition does not imply that $K$ is
symmetrizable nor that $\spec(K) \subset \R_+$. On the other hand, the following matrix is
symmetric definite positive (and thus $R_e$ is convex) but its inverse is not an M-matrix:
\begin{equation*}
  K = 
  \begin{pmatrix}
    3 & 2 & 0 \\
    2 & 2 & 1 \\
    0 & 1 & 4
  \end{pmatrix}
  \qquad \text{with inverse} \quad
  K^{-1} = 
  \begin{pmatrix}
    1.4 & -1.6 & 0.4 \\
    -1.6 & 2.4 & -0.6 \\
    0.4 & - 0.6 & 0.4 
  \end{pmatrix}.
\end{equation*}
Thus Friedland's condition and Property~\ref{pt:conv-meta} in
Theorem~\ref{th:hill-longini-meta} are not comparable. Note that if~$K$
is diagonally symmetrizable
and its inverse is an M-matrix, then the eigenvalues of $K$ are actually non-negative
thanks to \cite[Chapter~6~Theorem~2.3]{NonnegativeMatBerman1994} and one can apply
Theorem~\ref{th:hill-longini-meta}~\ref{pt:conv-meta} to recover Friedland's result  in this
case.

\subsection{Generalization to compact operators}

In this section, we give the analogue of
Theorem~\ref{th:hill-longini-meta} for positive compact operators 
instead of matrices. First, we proceed with some definitions.
By  analogy   with  the  matrix  case,   we   introduce    the   notion   of   diagonally
symmetrizable operators. 

Recall that $(\Omega, \cf, \mu)$ is a measured space with $\mu$ a
$\sigma$-finite non-zero measure. 
Recall the definition of consistent operators given in
Section~\ref{sec:inv-cont} before Lemma~\ref{lem:prop-spec-mult}. 
\begin{definition}[Diagonally symmetrizable operator]
  \label{def:diag-sym}
 A compact  operator $T$ on  $L^p(\mu)$, with  $p\in [1, +\infty  )$, is
  called \emph{diagonally symmetrizable} if there exists a
  $\sigma$-finite measure $\mu'$
  mutually absolutely continuous with respect  to $\mu$, and a compact
  self-adjoint operator
  $T'$ on $L^2(\mu')$ such that $T$ and
  $T'$ are consistent.
\end{definition}  

\begin{remark}[Diagonally symmetrizable operator in finite dimension]\label{rem:op=mat}
  Let  us check  that  Definition~\ref{def:diag-sym}  coincide with  the
  definition of  diagonally symmetrizable matrices in  finite dimension.
  Let $\Omega$ be a finite set, say $\{1, \ldots, n\}$, and without loss
  of generality assume that the measure  $\mu$, as well as $\mu'$, which
  can be seen as vectors of $\R^n$,  have  positive entries. The  sets
  $L^p(\mu)$ and $L^2(\mu')$  are all equal to $\R^n$ and  $T=T'$ can be
  represented by a  matrix, say $M$, in the canonical  base of $\R^n$. Let
  $D$ be the diagonal matrix with diagonal entries $\mu'$. Then $T'$
  being self-adjoint in $L^2(\mu')$ is equivalent to $DM$ being
  symmetric, and thus the matrix $M$ is diagonally symmetrizable (in the
  sense of the previous section). 
\end{remark}

We  give  an  example  of  diagonally  symmetrizable  integral  operator
motivated      by       the      epidemiological       framework      of
Example~\ref{ex:model=SIS2}         below.           Recall         from
Remark~\ref{rem:op-noyau}   that  a   kernel  $k$   on  $\Omega$   is  a
non-negative measurable function defined on $\Omega\times \Omega$.

\begin{proposition}[Diagonally symmetrizable integral operators]
  \label{prop:diag-kernel}
  Let $p\in (1, +\infty )$ and $q$ its conjugate, $k$ be a symmetric kernel on $\Omega^2$, and
  $f,g$ be two positive measurable functions on $\Omega$ such that:
  \begin{align*}
    \int_\Omega  f(x)^p \, \left( \int_\Omega k(x,y)^q\, g(y)^q\,  \mu(\rd y)
     \right)^{p/q}d\mu(x) &<+\infty,  \\ 
    \int_{\Omega^2}f(x)g(x)\,  k^2(x,y) \, f(y)g(y)\  \mu(\rd x) \mu(\rd
    y) &<+\infty .
\end{align*}
Then,                the                integral                operator
$T:u\mapsto \left(x\mapsto \int_\Omega f(x)\,  k(x,y)\, g(y)u(y)\, \mu(\rd y)\right)
$ on $L^p(\mu)$ is compact positive and diagonally symmetrizable.
\end{proposition}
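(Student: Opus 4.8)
The plan is to verify three properties in turn: that $T$ is a well-defined bounded operator on $L^p(\mu)$, that it is compact and positive, and finally that it is diagonally symmetrizable in the sense of Definition~\ref{def:diag-sym}. The first two are essentially bookkeeping using the integral-operator machinery recalled in Remark~\ref{rem:op-noyau}, while the genuine content of the statement lies in producing the self-adjoint consistent partner $T'$ on an $L^2(\mu')$, which the second integrability hypothesis is clearly designed to guarantee.

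First I would identify $T$ as an integral operator $T = M_f \, T_k \, M_g$, where $T_k$ has kernel $k$. To apply the compactness criterion from Remark~\ref{rem:op-noyau} I need the operator to have a finite double norm $\norm{\cdot}_{p,q}$ in the sense of \eqref{eq:norm-pq}. The kernel of $T$ is $(x,y)\mapsto f(x)\,k(x,y)\,g(y)$, so its double norm is
\[
  \int_\Omega f(x)^p \left( \int_\Omega k(x,y)^q\, g(y)^q\, \mu(\rd y)\right)^{p/q}\mu(\rd x),
\]
which is exactly the first hypothesis of the Proposition. Hence $T_\kk$ with $\kk(x,y)=f(x)k(x,y)g(y)$ has finite double norm, and since $p>1$, Remark~\ref{rem:op-noyau} yields that $T$ is compact. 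Positivity is immediate: $f$, $g$ and $k$ are non-negative, so $T$ maps $L^p_+(\mu)$ into $L^p_+(\mu)$.

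The main step is constructing $T'$. The natural guess, motivated by the symmetry of $k$, is to absorb $f$ and $g$ symmetrically: set $\mu'$ so that multiplication conjugation turns $T$ into a symmetric kernel operator. Concretely, I would take the measure $\rd\mu' = (fg)^{-1}\,\rd\mu$ (or an equivalent choice making $fg$ the density), which is $\sigma$-finite and mutually absolutely continuous with $\mu$ because $f,g$ are positive; then the operator $T'$ with kernel $(x,y)\mapsto f(x)g(x)\,k(x,y)\,f(y)g(y)$ — or the appropriately weighted version — acting on $L^2(\mu')$ should be self-adjoint, since $k$ is symmetric and the weights enter symmetrically in $x$ and $y$. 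The second integrability hypothesis,
\[
  \int_{\Omega^2} f(x)g(x)\, k^2(x,y)\, f(y)g(y)\ \mu(\rd x)\,\mu(\rd y) < +\infty,
\]
is precisely the Hilbert–Schmidt condition that makes this symmetric kernel define a bounded (indeed compact) self-adjoint operator on the relevant $L^2$ space. The delicate point — and the step I expect to be the main obstacle — is checking consistency in the sense of Section~\ref{sec:inv-cont}: I must verify that on a dense common subspace $E''=L^p(\mu)\cap L^2(\mu')$, which Lemma~\ref{lem:Lp-compatible} provides since $\mu$ and $\mu'$ are mutually absolutely continuous, both $T$ and $T'$ are well defined, leave $E''$ invariant, and agree. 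Agreement reduces to checking that the two kernels represent the same operator after accounting for the change of measure, i.e.\ that $T'u = Tu$ on $E''$ once the density $\rd\mu'/\rd\mu$ is folded in correctly; this is a direct computation with Fubini, legitimate under the two integrability hypotheses. Once consistency is established, Definition~\ref{def:diag-sym} applies and $T$ is diagonally symmetrizable.
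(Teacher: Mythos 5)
Your overall strategy is exactly the paper's: compactness of $T$ from the finite double norm (your computation that the first hypothesis is precisely $\norm{\kk}_{p,q}^p$ for $\kk(x,y)=f(x)k(x,y)g(y)$ is correct), positivity by inspection, and then symmetrizability by exhibiting a mutually absolutely continuous measure $\mu'$ and a Hilbert--Schmidt self-adjoint $T'$ on $L^2(\mu')$ consistent with $T$, using Lemma~\ref{lem:Lp-compatible} for compatibility. You also correctly read the second hypothesis as the Hilbert--Schmidt condition for the symmetrized kernel.

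However, the one step you yourself single out as the genuine content --- producing the pair $(\mu',T')$ --- is where your concrete choice fails. With $\rd\mu'=(fg)^{-1}\,\rd\mu$ and kernel $f(x)g(x)\,k(x,y)\,f(y)g(y)$, folding in the density gives
\[
  T'u(x)=\int_\Omega f(x)g(x)\,k(x,y)\,f(y)g(y)\,u(y)\,(f(y)g(y))^{-1}\,\mu(\rd y)
  = f(x)g(x)\int_\Omega k(x,y)\,u(y)\,\mu(\rd y),
\]
which is \emph{not} $Tu(x)=f(x)\int_\Omega k(x,y)g(y)u(y)\,\mu(\rd y)$ unless $g$ is constant; so the consistency check you defer would not go through. (Deceptively, your choice does reproduce the same Hilbert--Schmidt integral as the hypothesis, so the error only surfaces at the consistency step.) The constraint is rigid: if $T'$ has symmetric kernel $\phi(x)k(x,y)\phi(y)$ on $L^2(h\,\rd\mu)$, then consistency with $T$ forces $\phi(x)\phi(y)h(y)=f(x)g(y)$, i.e.\ $\phi=cf$ and $h=g/(c^2f)$. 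The paper's choice is the case $c=1$: take $\rd\mu'=(g/f)\,\rd\mu$ and $T'u(x)=\int_\Omega f(x)k(x,y)f(y)u(y)\,\mu'(\rd y)$, for which $T'$ and $T$ are given by the same formula on $L^p(\mu)\cap L^2(\mu')$, $T'$ is manifestly self-adjoint by symmetry of $k$, and its Hilbert--Schmidt norm squared is exactly your second hypothesis. With that substitution the rest of your argument is sound.
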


\begin{proof}
  The measure $\rd \mu' = (g/f) \, \rd \mu$ is $\sigma$-finite and mutually
  absolutely  continuous with  respect to  $\mu$. Consider  the integral
  operator $T':u \mapsto \left(x \mapsto  \int_\Omega f(x)\, k(x,y)\, f(y) u(y)\,
  \mu'(\rd y)\right)$.   The integrability assumptions ensure  that $T$
  is  compact  on   $L^p(\mu)$  and  $T'$  is  compact   (and  in  fact,
  Hilbert-Schmidt)   on   $L^2(\mu')$; see Remark~\ref{rem:op-noyau}.
  According   to   Lemma~\ref{lem:Lp-compatible},  the   Banach   spaces
  $L^p(\mu)$ and $L^2(\mu')$ are compatible.  Since the operators $T$ and
  $T'$ are defined by the same kernel formula on their respective space,
  they  are consistent.   Finally  since the  compact  operator $T'$  is
  clearly self-adjoint on $L^2(\mu')$. This implies that $T$ is diagonally symmetrizable.
  \end{proof}

\begin{example}[Epidemics on graphon]\label{ex:model=SIS2}
  Consider the SIS model on graphon
  introduced in \cite[Example~1.3]{delmas_infinite-dimensional_2020}. In this example, the
  next-generation operator is an integral operator as defined in
  Remark~\ref{rem:op-noyau} associated to the kernel $\kk$ given by $\kk(x,y)=
  \beta(x)\, W(x,y)\, \theta(y)/\gamma(y)$ where $\beta(x)$ represents the
  susceptibility,  $\theta(x)$ the infectiousness and $\gamma(x)$ the recovery rate of the
  individuals with trait  $x$, and $W$ corresponds to the graph  of the contacts within 
  the population. More precisely, for $x,y \in \Omega$, the  quantity $W(x,  y) \in [0,
  1]$ represents the  density of contacts between  individuals with traits $x$ and $y$
  and is equal to $W(y,x)$ by construction. We deduce from
  Proposition~\ref{prop:diag-kernel} that if $\beta\in L^p(\mu)$ and $\theta/\gamma\in
  L^q(\mu)$ with $p\in (1, +\infty )$ and $q$ its conjugate, then the integral operator
  $T_\kk$ with kernel $\kk$ defined by~\eqref{eq:def-op-int} is  diagonally 
  symmetrizable. 
\end{example}

\begin{remark}[Related notions]
    An operator $T$ on a Hilbert space is classically called symmetrizable
    if there exists a positive bounded self-adjoint operator $H$ such that $HT$
    is self-adjoint: this notion is discussed for example in
    \cite{Rei51,zaanen_linear,Har71}.
    Our definition is closer in spirit to~\cite{Lax54}, where symmetrizability
    is discussed for operators on Banach spaces with respect to a scalar product.
    In the matrix case our setting is a bit more restrictive than general symmetrizability,
    since we symmetrize by a diagonal matrix with positive terms. In the
    general case the conditions are not comparable, since we do not
    impose any upper nor lower bound  assumption on the density $d\nu/d\mu$. 
\end{remark}

We complete  Section~\ref{sec:equivalent} with  an  other example of
operators having the same effective spectrum.
  
\begin{lemma}\label{lem:Sp=DiagSym}
  Let $T$ be a diagonally symmetrizable compact operator on $L^p(\mu)$, with
  $p\in  [1, +\infty  )$,  and let $T'$ be the associated self-adjoint operator from
  Definition~\ref{def:diag-sym}. Then, we have that on $\Delta$:
  \[
    \spec[T]=\spec[T'], \quad R_e[T]=R_e[T']
    \quad\text{and}\quad
    \, \mult(\lambda, T)=\mult(\lambda, T')\quad\text{for $\lambda\in \C^*$}.
  \]
\end{lemma}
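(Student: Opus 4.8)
The plan is to reduce everything, for each fixed $\eta\in\Delta$, to a single application of the consistency criterion in Lemma~\ref{lem:prop-spec-mult}~\ref{item:density-mult}, applied to the pair $TM_\eta$ on $E=L^p(\mu)$ and $T'M_\eta$ on $E'=L^2(\mu')$. The compatibility of $E$ and $E'$ is already provided by Lemma~\ref{lem:Lp-compatible}, since by Definition~\ref{def:diag-sym} the measures $\mu$ and $\mu'$ are mutually absolutely continuous and $p,2\in[1,+\infty)$. Once the conclusion holds for every $\eta$, it transfers directly to the functions $\spec[T]$ and $R_e[T]$, and taking $\eta=\un$ recovers the statement about the multiplicities of $T$ itself.

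First I would record the elementary facts about $M_\eta$. Because $\mu$ and $\mu'$ share the same null sets, $L^\infty(\mu)$ and $L^\infty(\mu')$ coincide as sets of a.e.-equivalence classes; in particular $\eta\in\Delta$ determines a single multiplication operator $M_\eta$, bounded (by $1$) on both $L^p(\mu)$ and $L^2(\mu')$, acting by the same pointwise multiplication on the common subspace $E''=L^p(\mu)\cap L^2(\mu')$ and mapping $E''$ into itself. Combining this boundedness with the compactness of $T$ on $L^p(\mu)$ and of $T'$ on $L^2(\mu')$ gives at once that $TM_\eta$ and $T'M_\eta$ are compact on their respective spaces.

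The key step is to check that $TM_\eta$ and $T'M_\eta$ are consistent. By hypothesis $T$ and $T'$ are consistent, so $T(E'')\subset E''$, $T'(E'')\subset E''$ and $Tx=T'x$ for all $x\in E''$. Since $M_\eta$ maps $E''$ into itself and agrees on the two spaces, for $x\in E''$ the element $M_\eta x$ lies in $E''$, whence $TM_\eta x = T(M_\eta x)=T'(M_\eta x)=T'M_\eta x$, and both values again lie in $E''$; this is precisely the consistency of $TM_\eta$ and $T'M_\eta$. I expect this bookkeeping around the intersection space $E''$, rather than any analytic subtlety, to be the only delicate point.

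Finally, Lemma~\ref{lem:prop-spec-mult}~\ref{item:density-mult} yields $\spec(TM_\eta)=\spec(T'M_\eta)$ and $\mult(\lambda,TM_\eta)=\mult(\lambda,T'M_\eta)$ for all $\lambda\in\C^*$. As $\eta\in\Delta$ is arbitrary, this gives $\spec[T]=\spec[T']$ on $\Delta$; applying $\mathrm{rad}$ to both sides gives $R_e[T]=R_e[T']$; and specializing to $\eta=\un$, where $M_\un=\mathrm{Id}$, gives $\mult(\lambda,T)=\mult(\lambda,T')$ for $\lambda\in\C^*$.
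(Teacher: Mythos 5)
Your proposal is correct and follows essentially the same route as the paper: compatibility of $L^p(\mu)$ and $L^2(\mu')$ via Lemma~\ref{lem:Lp-compatible}, consistency and compactness of $TM_\eta$ and $T'M_\eta$, then Lemma~\ref{lem:prop-spec-mult}~\ref{item:density-mult}. You merely spell out the bookkeeping on $E''$ that the paper leaves implicit.
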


\begin{proof}
  Let $\mu'$ be the measure from Definition~\ref{def:diag-sym}. Recall that the Banach
  spaces $L^p(\mu)$ and on  $L^2(\mu')$ are compatible thanks to
  Lemma~\ref{lem:Lp-compatible}.  Let $\eta\in \Delta$. Since $M_\eta$ is bounded  (both
  on $L^p(\mu)$ and  $L^2(\mu')$),  the operators  $TM_\eta$  and  $T M_\eta$,  acting
  respectively on  $L^p(\mu)$ and  $L^2(\mu')$, are both  compact. Since $T$ and  $T'$ are
  consistent, the operators $TM_\eta$ and  $T' M_\eta$  are also consistent. Then use
  Lemma~\ref{lem:prop-spec-mult}~\ref{item:density-mult} to conclude. 
\end{proof}

The next corollary is immediate as the spectrum of a self-adjoint
operator, say $T'$,  is real and its spectral radius is zero if and only
if $T'=0$. 

\begin{corollary}
  \label{cor:diag-sym=spec-reel}
  Let $T$ be a compact operator  on $L^p(\mu)$,
  with $p\in  [1, +\infty )$. If  $T$  is diagonally symmetrizable, then  its spectrum is real,
  and $T$ cannot be quasi-nilpotent: 
    $R_0(T)=0$ if and only if  $T=0$.
\end{corollary}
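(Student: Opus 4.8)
The plan is to derive this corollary directly from Lemma~\ref{lem:Sp=DiagSym}, which is the real content; the corollary is then a short deduction combining two elementary facts about self-adjoint compact operators. First I would invoke the lemma to replace the study of $T$ by that of its associated self-adjoint operator $T'$ on $L^2(\mu')$, using in particular the identity $\spec[T]=\spec[T']$ on $\Delta$ and, specialising to $\eta=\un$, the equality $\spec(T)=\spec(T')$.

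For the first claim (real spectrum), the key step is that a compact self-adjoint operator $T'$ on the Hilbert space $L^2(\mu')$ has real spectrum; this is the classical spectral theorem for self-adjoint operators, so $\spec(T')\subset\R$, and transporting this back through $\spec(T)=\spec(T')$ gives that $\spec(T)$ is real.

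For the second claim, I would argue by the contrapositive on the non-trivial direction. Clearly if $T=0$ then $R_0(T)=\rho(T)=0$. Conversely, suppose $R_0(T)=0$, i.e.\ $\rho(T)=0$. By Lemma~\ref{lem:Sp=DiagSym} (with $\eta=\un$) we have $\rho(T)=\rho(T')$, so $\rho(T')=0$. For a self-adjoint operator on a Hilbert space the operator norm equals the spectral radius, so $\norm{T'}=\rho(T')=0$, whence $T'=0$. It then remains to transfer $T'=0$ back to $T=0$: since $T$ and $T'$ are consistent and agree on the dense subspace $E''=L^p(\mu)\cap L^2(\mu')$, and $T'=0$ there, we get $T x=0$ for all $x\in E''$; by continuity of $T$ on $L^p(\mu)$ and density of $E''$ in $L^p(\mu)$ (from Lemma~\ref{lem:Lp-compatible}), this forces $T=0$ on all of $L^p(\mu)$.

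The only subtle point, and hence the main obstacle, is the final transfer of the vanishing of $T'$ to the vanishing of $T$: one must argue through consistency and the density of the common subspace rather than pretending $T$ and $T'$ act on the same space. Everything else (real spectrum, $\norm{T'}=\rho(T')$) is a direct appeal to standard self-adjoint spectral theory, and the author's remark that ``the next corollary is immediate'' suggests this transfer is meant to be folded silently into the already-established consistency machinery of Lemma~\ref{lem:Sp=DiagSym}.
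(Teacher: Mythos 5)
Your proposal is correct and follows essentially the same route as the paper: the authors also deduce the corollary immediately from Lemma~\ref{lem:Sp=DiagSym} together with the two standard facts that a compact self-adjoint operator has real spectrum and vanishes exactly when its spectral radius is zero. The only difference is that you make explicit the final transfer from $T'=0$ to $T=0$ via consistency and density, which the paper leaves implicit; this is a legitimate and correctly handled detail, not a divergence in method.
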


For  a  compact  operator  $T$,  let  $\pos(T)$  and  $\nega(T)$  denote
respectively the  number of its  eigenvalues with positive  and negative
real part taking into account their (algebraic) multiplicity:
\[
  \pos(T)= \sum_{\Re(\lambda)>0} \mult(\lambda,T)
  \quad\text{and}\quad
  \nega(T)= \sum_{\Re(\lambda)<0} \mult(\lambda,T). 
\]

We now give a consequence of Sylvester's inertia theorem
\cite[Theorem 6.1]{Cain80}.

\begin{proposition}[Sylvester]
  \label{prop:sylvester}
  Let $T$ be a compact  diagonally symmetrizable operator on $L^p(\mu)$,
  with $p\in [1, +\infty  )$. Let $f,  g$ be positive bounded
  measurable functions defined on~$\Omega$  which are also bounded
  away from $0$. Then the compact operator $M_f T M_g$ on $L^p(\mu)$ is
  diagonally symmetrizable with the same inertia as $T$:
  \[
    \pos(T)=\pos(M_f T M_g)
    \quad\text{and}\quad
    \nega(T)=\nega(M_f T M_g).
  \]
\end{proposition}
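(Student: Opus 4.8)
The plan is to realize $M_f T M_g$ as a diagonally symmetrizable operator by producing an explicit self-adjoint representative on a re-weighted $L^2$ space, and then to deduce the equality of inertias from Sylvester's theorem applied to a congruence relating this representative to the one attached to $T$. Fix the data of Definition~\ref{def:diag-sym}: a $\sigma$-finite measure $\mu'$ mutually absolutely continuous with $\mu$, and a compact self-adjoint operator $T'$ on $L^2(\mu')$ consistent with $T$. Since $f,g$ are bounded, $M_f$ and $M_g$ are bounded on $L^p(\mu)$, so $M_f T M_g$ is compact. Introduce $\rd \mu'' = (g/f)\,\rd\mu'$; as $g/f$ is positive, bounded and bounded away from $0$, the measure $\mu''$ is $\sigma$-finite and mutually absolutely continuous with $\mu$, and $L^2(\mu'')=L^2(\mu')$ with equivalent norms. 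The candidate associated self-adjoint operator is $\tilde T = M_f T' M_g$, now viewed on $L^2(\mu'')$.

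The key computation is to verify that $\tilde T$ is self-adjoint on $L^2(\mu'')$. For $u,v\in L^2(\mu'')$, using $\rd\mu'' = (g/f)\,\rd\mu'$ (so that the outer weight $f$ cancels the density) and the self-adjointness of $T'$ on $L^2(\mu')$, one obtains
\[
  \langle \tilde T u, v\rangle_{\mu''} = \langle T'(gu),\, gv\rangle_{\mu'} = \langle gu,\, T'(gv)\rangle_{\mu'} = \langle u, \tilde T v\rangle_{\mu''}.
\]
Compactness of $\tilde T$ follows from that of $T'$ and the boundedness of $M_f,M_g$, together with $L^2(\mu'')=L^2(\mu')$. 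Since $T$ and $T'$ agree on $L^p(\mu)\cap L^2(\mu')$ and $M_f,M_g$ implement the same pointwise operations on both spaces, the operators $M_f T M_g$ and $\tilde T$ are consistent on $L^p(\mu)\cap L^2(\mu'')$ (invoking Lemma~\ref{lem:Lp-compatible} for compatibility). By Definition~\ref{def:diag-sym}, this already shows that $M_f T M_g$ is diagonally symmetrizable, with associated self-adjoint operator $\tilde T$.

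It remains to match the inertias. By Lemma~\ref{lem:Sp=DiagSym} and Corollary~\ref{cor:diag-sym=spec-reel}, we have $\mult(\lambda,T)=\mult(\lambda,T')$ and $\mult(\lambda,M_f T M_g)=\mult(\lambda,\tilde T)$ for all $\lambda\in\C^*$, and the nonzero spectra of the compact self-adjoint operators $T'$ and $\tilde T$ consist of real eigenvalues; hence $\pos(T)=\pos(T')$, $\nega(T)=\nega(T')$, and likewise for $M_f T M_g$ and $\tilde T$. So it suffices to prove $\pos(T')=\pos(\tilde T)$ and $\nega(T')=\nega(\tilde T)$. The same computation as above gives, for the quadratic forms, $\langle \tilde T u, u\rangle_{\mu''} = \langle T'(M_g u),\, M_g u\rangle_{\mu'}$, and $M_g$ is a bicontinuous isomorphism from $L^2(\mu'')$ onto $L^2(\mu')$ (boundedness because $fg$ is bounded, bounded inverse because $fg$ is bounded away from $0$). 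Thus the quadratic form of $\tilde T$ is the pullback of that of $T'$ under the invertible operator $M_g$, and Sylvester's inertia theorem \cite[Theorem~6.1]{Cain80} yields the required equalities of inertia, completing the proof.

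The step I expect to be the main obstacle is the careful invocation of Sylvester's inertia theorem in the compact self-adjoint setting: one must check that the congruence is genuinely implemented by a bicontinuous isomorphism — precisely where the hypotheses that $f$ and $g$ are bounded and bounded away from $0$ are needed — and that the positive/negative subspace counts provided by \cite{Cain80} coincide with the eigenvalue counts $\pos$ and $\nega$. The remaining steps, namely self-adjointness of $\tilde T$ and the identification of its nonzero spectrum with that of $M_f T M_g$, reduce to the self-adjointness computation and to bookkeeping through the consistency and compatibility lemmas already established.
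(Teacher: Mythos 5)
Your proof is correct and follows essentially the same route as the paper: you introduce the reweighted measure $\rd\mu''=(g/f)\,\rd\mu'$, take $M_fT'M_g$ as the self-adjoint representative (the paper's $\tilde T'=\Phi\circ(M_{\sqrt{fg}}T'M_{\sqrt{fg}})\circ\Phi^{-1}$ with $\Phi=M_{\sqrt{f/g}}$ simplifies to exactly this operator), and conclude via Lemma~\ref{lem:Sp=DiagSym} together with Sylvester's inertia theorem applied to the congruence by the invertible multiplication operator. The only cosmetic difference is that the paper factors the congruence through the explicit isometry $\Phi$ so that \cite[Theorem~6.1]{Cain80} is invoked within the single Hilbert space $L^2(\mu')$, whereas you apply it directly to the two-space congruence $(M_g)^{*}T'M_g$; both are valid.
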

\begin{proof}
  First note that if $h$ is a positive bounded and bounded
  away from $0$, then for any $r\in[1, +\infty )$ and $\sigma$-finite
  non-zero measure $\nu$, the multiplication operator
  $M_h$ is bounded with bounded inverse on $L^r(\nu)$.
  In particular, the operator $\tilde{T} = M_f TM_g$ is a compact
  operator on  $L^p(\mu)$ as $T$ is compact. 

  Let $T'$ be the compact self-adjoint operator on $L^2(\mu')$ associated to $T$
  from         Definition~\ref{def:diag-sym}.        The         measure
  $\rd  \tilde{\mu}' =  (g/f)\, \rd  \mu'$ is  $\sigma$-finite and  mutually
  absolutely continuous with  respect to $\mu'$ and   $\mu$ also.  The mapping
  $\Phi=M_{\sqrt{f/g}}$ is an isometry between the Hilbert spaces $L^2(\tilde \mu')$
  and~$L^2(\mu')$.

  We now define the operator $\tilde{T}'$ on $L^2(\tilde \mu')$ by:
  \[
    \tilde{T}' = \Phi \circ (M_{\sqrt{fg}}T' M_{\sqrt{fg}}) \circ
    \Phi^{-1}.
  \]
  Since  $T'$ is  compact, the  operator $\tilde{T}'$  is also  compact.
  Since  $f$ and  $g$ are  bounded  and bounded  away from  0, the  sets
  $L^p(\mu)\cap  L^2(\mu')$  and   $L^p(\mu)\cap  L^2(\tilde{\mu})$  are
  equal.    Since  $T$   and  $T'$   coincide   on  this   set,  so   do
  $M_fTM_g    =    \tilde{T}$    and   $\tilde{T}'$.     The    operator
  $M_{\sqrt{fg}}  T'   M_{\sqrt{fg}}$  is   bounded  and   symmetric  on
  $L^2(\mu')$, and therefore self-adjoint.  Since $\Phi$ is an isometry,
  we deduce  that $\tilde{T}'$  is self-adjoint  on $L^2(\tilde{\mu}')$.
  Therefore  the  operator  $\tilde{T}  = M_f  TM_g$  on  $L^p(\mu)$  is
  diagonally symmetrizable. 

  We now establish the following string of equalities:    
  \begin{equation}
    \label{eq:stringOfPos}
    \pos(T)=\pos(T')=\pos(M_{\sqrt{fg}}T' M_{\sqrt{fg}})
    =\pos(\tilde T')=\pos(\tilde T)=\pos(M_f TM_g),
  \end{equation}
  By Lemma~\ref{lem:Sp=DiagSym},
 $ \pos(T) = \pos(T')$ and $\pos(\tilde{T}') = \pos(\tilde{T})$. 
  Since $M_{\sqrt{fg}}$ is invertible in $L^2(\mu')$ and $T'$ is
  self-adjoint (thus with real eigenvalues), we get, using the
  generalization of Sylvester's inertia theorem
  \cite[Theorem~6.1]{Cain80} (the definition of inertia in that
  paper being consistent with the definition of $\pos(\cdot)$ and $\nega(\cdot)$,
  which can be checked using~\cite[Theorem~4.5(ii)]{Cain80}):
  \( \pos(T') = \pos(M_{\sqrt{fg}} T' M_{\sqrt{fg}})\).
  Finally, since $\Phi$ is an isometry,
  \(\pos(M_{\sqrt{fg}} T' M_{\sqrt{fg}}) = \pos(\tilde{T}')\),
  and~\eqref{eq:stringOfPos} is justified.

  The equalities are similar for the
  number of negative eigenvalues $\nega(\cdot)$.
\end{proof}

The following result is the analogue of Theorem~\ref{th:hill-longini-meta} for positive
compact operators. Note that  if $T$ is a positive compact operator with $R_0[T]>0$, then
$R_0[T]$ is an eigenvalue of  $T$ thanks to the Krein-Rutman theorem, see
\cite[Corollary~9]{schwartz61}, and thus $\pos(T)\geq 1$.

\begin{theorem} [Convexity/Concavity of $R_e$]\label{th:hill-longini}
  Let $T$  be a  positive compact  diagonally symmetrizable  operator on
  $L^p(\mu)$,  with  $p\in [1,  +\infty  )$.  We consider  the  function
  $R_e=R_e[T]$ defined on $\Delta$.
\begin{enumerate}[(i)]
   \item\label{item:hill-longini-cvx} If $\nega(T)=0$, then the function $R_e$
     is convex.
   \item\label{item:hill-longini-cave} If
     $\pos(T)=1$, 
     then the function $R_e$ is concave.
   \end{enumerate}
 \end{theorem}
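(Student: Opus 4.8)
The plan is to push everything to the self-adjoint setting and then exploit two genuinely different convexity mechanisms. By Lemma~\ref{lem:Sp=DiagSym} there are a $\sigma$-finite measure $\mu'$, mutually absolutely continuous with $\mu$, and a compact self-adjoint operator $T'$ on $L^2(\mu')$, consistent with $T$, with $R_e[T]=R_e[T']$ on $\Delta$ and $\mult(\lambda,T)=\mult(\lambda,T')$ for all $\lambda\in\C^*$. In particular $\pos(T')=\pos(T)$ and $\nega(T')=\nega(T)$, which is the inertia invariance underlying Proposition~\ref{prop:sylvester}, and the nonzero spectrum of $T'$ is real. So I may assume $T=T'$ is self-adjoint on $L^2(\mu')$ and positive as a lattice operator, with $\nega(T)=0$ in case~\ref{item:hill-longini-cvx} and $\pos(T)=1$ in case~\ref{item:hill-longini-cave}. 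I will use repeatedly that, by \eqref{eq:r(AB)}, $R_e(\eta)=\rho(TM_\eta)=\rho(M_{\sqrt\eta}TM_{\sqrt\eta})$, where $M_{\sqrt\eta}TM_{\sqrt\eta}$ is self-adjoint and positive, so that $R_e(\eta)$ is the top of its spectrum.

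For the convex case~\ref{item:hill-longini-cvx}, the hypothesis $\nega(T)=0$ forces $T\succeq 0$, so $T=S^2$ with $S=T^{1/2}$ self-adjoint and positive. Since $\rho(S^2M_\eta)=\rho(SM_\eta S)$ and $SM_\eta S$ is self-adjoint positive semidefinite, I would write
\[
  R_e(\eta)=\rho(SM_\eta S)=\sup_{\norm{x}_2=1}\langle SM_\eta Sx,x\rangle=\sup_{\norm{x}_2=1}\int_\Omega \eta\,\abs{Sx}^2\,\rd\mu'.
\]
For each fixed $x$ the map $\eta\mapsto\int_\Omega \eta\,\abs{Sx}^2\,\rd\mu'$ is linear, so $R_e$ is a supremum of affine functions, hence convex. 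This settles~\ref{item:hill-longini-cvx}.

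For the concave case~\ref{item:hill-longini-cave}, I would use the spectral decomposition $T=\mu_1P_1-N$, where $\mu_1=R_0>0$ is the unique simple positive eigenvalue with unit eigenvector $e_1$, $P_1=\langle\cdot,e_1\rangle e_1$, and $N\succeq 0$ is built from the non-positive part of the spectrum, so $Ne_1=0$. Fix $\lambda>0$ and first take $\eta$ bounded away from $0$, so $M_{\sqrt\eta}$ is invertible. Comparing $M_{\sqrt\eta}TM_{\sqrt\eta}$ with $\lambda\,\mathrm{Id}$ and isolating the rank-one positive part gives the secular characterization
\[
  R_e(\eta)\leq\lambda \iff \mu_1\,\bigl\langle (\lambda\,\mathrm{Id}+M_{\sqrt\eta}NM_{\sqrt\eta})^{-1}\sqrt\eta\,e_1,\sqrt\eta\,e_1\bigr\rangle\leq 1,
\]
and conjugating by $M_{\sqrt\eta}$ rewrites the scalar as $\mu_1\langle(\lambda M_{1/\eta}+N)^{-1}e_1,e_1\rangle$. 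Its reciprocal equals $\mu_1^{-1}G_\lambda(\eta)$ with
\[
  G_\lambda(\eta)=\inf_{\langle x,e_1\rangle=1}\Bigl(\lambda\int_\Omega\frac{\abs{x}^2}{\eta}\,\rd\mu'+\langle Nx,x\rangle\Bigr).
\]
The crux is that $(\eta,x)\mapsto \abs{x}^2/\eta$ is jointly convex (a perspective function) and $\langle Nx,x\rangle$ is convex in $x$ because $N\succeq 0$; partial minimization over the affine set $\{\langle x,e_1\rangle=1\}$ then makes $G_\lambda$ convex in $\eta$. Hence $\{R_e>\lambda\}=\{G_\lambda<\mu_1\}$ is convex for every $\lambda>0$, so $R_e$ is quasi-concave on $\{\eta\geq c\}$ for each $c>0$; the boundary of $\Delta$ is reached by approximating $\eta$ with $(1-\varepsilon)\eta+\varepsilon\un$ and using the continuity of $R_e$ from Lemma~\ref{lem:cont-spec}.

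Finally I would upgrade quasi-concavity to concavity via homogeneity: extending $R_e$ to the positive cone through $R_e(t\eta)=t\,R_e(\eta)$ (Proposition~\ref{prop:R_e}\ref{prop:normal}) and using monotonicity (Proposition~\ref{prop:R_e}\ref{prop:increase}), a positively homogeneous, nonnegative, quasi-concave function is superadditive and therefore concave. I expect the main obstacle to be precisely the concave case: justifying the rank-one reduction and the convexity of $G_\lambda$ rigorously in infinite dimensions (the invertibility of $M_{\sqrt\eta}$ and the variational identity require $\eta$ bounded away from $0$), ensuring the relevant eigenvalue stays the Perron root $R_0$ throughout, and passing cleanly to the boundary by continuity. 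The decisive structural input is the single positive eigenvalue, i.e. the inertia condition $\pos(T)=1$ transported through the self-adjoint reduction.
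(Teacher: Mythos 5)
Your argument is correct, and while your convex half~\ref{item:hill-longini-cvx} coincides with the paper's (reduce to a self-adjoint consistent operator $T'$ via Lemma~\ref{lem:Sp=DiagSym}, write $T'=S^2$, and express $R_e(\eta)=\rho(S M_\eta S)$ as a supremum of linear functionals of $\eta$), your concave half~\ref{item:hill-longini-cave} takes a genuinely different route. The paper restricts to the dense convex set $\Delta^*$ of strategies bounded away from $0$, moves along a segment $\eta_\alpha=(1-\alpha)\eta_0+\alpha\eta_1$, uses Sylvester's inertia theorem (Proposition~\ref{prop:sylvester}) to guarantee that $\pos(TM_{\eta_\alpha})=1$ for all $\alpha$, so that $R(\alpha)=R_e(\eta_\alpha)$ stays a simple isolated eigenvalue, and then computes $R''(\alpha)$ by analytic perturbation theory, obtaining the explicit formula~\eqref{eq:R2}, whose sign is non-positive because every other non-zero eigenvalue is negative. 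You instead split $T'=\mu_1 P_1-N$ with $N\succeq 0$ and $Ne_1=0$, characterize $\{R_e\leq\lambda\}$ through the secular equation for a rank-one positive perturbation, rewrite the resulting scalar via the variational identity $1/\langle B^{-1}e_1,e_1\rangle=\inf_{\langle x,e_1\rangle=1}\langle Bx,x\rangle$, and invoke the joint convexity of the perspective function $(\eta,x)\mapsto |x|^2/\eta$ together with partial minimization to show that the strict superlevel sets of $R_e$ are convex; positive homogeneity and monotonicity (Proposition~\ref{prop:R_e}) then upgrade quasi-concavity to superadditivity and hence concavity, the monotonicity being genuinely needed to handle the degenerate case $R_e(\eta_0)=0$ in the superadditivity argument. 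The passage to the boundary of $\Delta$ by continuity (Lemma~\ref{lem:cont-spec}) is common to both proofs. Your route avoids perturbation theory entirely and does not need to control the inertia along the segment --- the hypothesis $\pos(T)=1$ enters only once, through the rank-one structure of the positive spectral part --- at the price of the extra quasi-concave-to-concave step; the paper's route is heavier but produces the second-derivative formula~\eqref{eq:R2}, which is reused for the convexity of quasi-irreducible operators in Remark~\ref{rem:cvxe2}. The points you flag yourself (working first with $\eta$ bounded away from $0$ so that $M_{1/\eta}$ is bounded, and identifying $\rho(M_{\sqrt{\eta}}T'M_{\sqrt{\eta}})$ with the top of the spectrum via Krein--Rutman) are exactly the details that must be written out, but none of them is an obstacle.
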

 The proof  for positive self-adjoint  operator $T$ is given  in Section
 \ref{sec:convexe} for the convex  case and in Section \ref{sec:concave}
 for the concave case when $T$  is compact.  The extension to diagonally
 symmetrizable   positive  compact   operators  follows   directly  from
 Lemma~\ref{lem:Sp=DiagSym}.

\begin{remark}[Rank-one operator]\label{rem:config}
  The so-called \emph{configuration  model} occurs in finite dimension  when the next-generation 
  matrix has rank  one.  This corresponds to a classical mixing  structure called the
  \emph{proportionate mixing} introduced by \cite{HeterogeneityINold1980} and used in many
  different epidemiological models. Motivated by the  finite dimensional case, we consider
  a \emph{configuration} kernel $\kk$ defined by
  \[
    \kk=f\otimes  g \quad\text{where}  \quad (f\otimes g)  (x,y) =  f(x) g(y),
  \]
  with  $f\in  L^p$  and  $g  \in   L^q$  for some $p \in (1, +\infty)$ and  $q =
  p/(p-1)$.  We also suppose  that $\mu(fg  > 0)  > 0$.   Let $T_\kk$  denote the 
  integral operator with kernel  $\kk$, see Remark~\ref{rem:op-noyau}. According to       
  Proposition~\ref{prop:diag-kernel},              with $k=\ind{\{f>0\}} \otimes 
  \ind{\{g>0\}}$ and $h\in \{f,  g\}$ replaced by     $h+h'\ind{\{h=0\}}$     for     
  some     positive     function $h'\in L^p(\mu)\cap  L^q(\mu)$, we  deduce that the 
  integral operator $T_\kk$   on   $L^p(\mu)$   is   compact   positive   and   diagonally
  symmetrizable. Since $T_\kk$ is of rank one,  we deduce from
  Theorem~\ref{th:hill-longini} that  $R_e[T_\kk]$  is convex and concave and thus linear.
  This can be checked directly as it is immediate to notice that:
  \begin{equation}
    R_e[T_\kk](\eta) = \int_\Omega f g\, \eta \, \mathrm{d}\mu.
  \end{equation}
  We shall provide in \cite{ddz-mono} a deeper study of configuration kernels in the
  context of epidemiology.
\end{remark}
  
\subsection{The convex case}\label{sec:convexe}

The      proof      of      Property~\ref{item:hill-longini-cvx}      in
Theorem~\ref{th:hill-longini} relies on an idea from \cite{Fried80} (see
therein  just  before Theorem  4.3).   Let  $T$  be a
self-adjoint      operator     on      $L^2=L^2(\mu)$     such      that
$  \spec(T)\subset   \R_+$.   As  $R_0[T]=0$  implies   $T=0$  and  thus
$R_e[T]=0$, we shall only consider the case $R_0[T]>0$.
Since $T$ is a self-adjoint positive semi-definite operator on $L^2$, there exists a
self-adjoint positive semi-definite operator $Q$ on $L^2$ such that $Q^2=T$. Thanks to \eqref{eq:r(AB)}, we have for $\eta\in \Delta$:
\[
  R_e[T](\eta)=\rho(T \,M_\eta)=\rho(Q^2\,
  M_\eta)= \rho ( Q \,M_\eta\, Q). 
\]
Since the self-adjoint operator $Q\, M_{\eta}\, Q$ on $L^2$ is also positive
semi-definite, we deduce from the Courant-Fischer-Weyl min-max principle that:
\[
  R_e[T](\eta)= \rho\left(Q\, M_{\eta}\, Q\right)=\sup_{u\in L^2(\mu)\setminus\set{0}}
  \frac{\langle u, Q \, M_{\eta} \,Qu \rangle}{ \langle u,u \rangle}\cdot
\]
Since the map $\eta \mapsto \langle u, Q \, M_{\eta} \,Q u \rangle$ defined on $\Delta$
is linear, we deduce that $\eta\mapsto R_e[T](\eta)$ is convex as a supremum of linear
functions.

\subsection{The concave case}\label{sec:concave}

The proof of Property~\ref{item:hill-longini-cave} in
Theorem~\ref{th:hill-longini}  relies on  a  computation  of the  second derivative  of 
the function  $R_e$.   Let  $T$  be a  positive  compact self-adjoint operator on
$L^2(\mu)$ such that $\pos(T)=1$.  Let $\Delta^*$ be the subset of  $\Delta$ of the
functions which are  bounded away from 0. The  set $\Delta^*$  is  a  dense convex  subset
of  $\Delta$ (for  the $L^2(\mu)$-convergence or simple convergence).   The function
$R_e=R_e[T]$ is continuous on $\Delta$, see Lemma~\ref{lem:cont-spec} (indeed, with the
notations  therein,  take  $v_n=1$,  $w_n\in  \Delta$  and  notice  that
$R_e[T_n](\un)=R_e[T](w_n)$                 converges                 to $R_e[T_\infty
](\un)=R_e[T](w_\infty  )$). So its suffice  to prove that $R_e=R_e[T]$  is  concave  on 
$\Delta^*$.  Let  $\eta_0$,  $\eta_1$  be elements           of            $\Delta^*$,    
and           set $\eta_\alpha =  (1-\alpha)\eta_0 + \alpha\eta_1$ for  $\alpha\in
[0, 1]$ (which    is   also    an    element   of    $\Delta^*$).    We    write $T_\alpha
= T  M_{\eta_\alpha}$, so that $T_\alpha = T_0  + \alpha T M$, where    $M=M_{\eta_1   
-\eta_0}$     is    the    multiplication    by $(\eta_1 - \eta_0)$ operator, and, with
$R(\alpha) = R_e(\eta_\alpha)$:
\[
  R(\alpha) = \rho(T_\alpha)=\rho(T_0 + \alpha T
  M). 
\]
So, to prove that $R_e$ is concave on $\Delta^*$ (and thus on $\Delta)$, it is enough to
prove that $\alpha\mapsto R(\alpha)$ is concave on $(0, 1)$.
Thanks to Sylvester's inertia theorem stated in Proposition~\ref{prop:sylvester} (with
$f=1$ and $g=\eta_\alpha$), we also get that $\pos(T_\alpha)=\pos(T)=1$. This implies that
$R(\alpha)$ is positive and a simple eigenvalue.

 We  consider the  following  scalar product  on  $L^2(\mu)$ defined  by
 $\braket{u,v}_\alpha   =   \braket{u,\eta_\alpha  v}$.   The   operator
 $T_\alpha$ is  self-adjoint and  compact on $L^2(\eta_\alpha  \rd \mu)$
 with         spectrum        $\spec(T_\alpha)$         thanks        to
 Lemma~\ref{lem:prop-spec-mult}~\ref{item:density-mult}              and
 Lemma~\ref{lem:Lp-compatible}.  Let $(\lambda_n, n\in I=\lb 0, N \lb)$,
 with  $N\in \N  \cup\{\infty  \}$  be an  enumeration  of the  non-zero
 eigenvalues   of   $T_\alpha$   with   their   multiplicity   so   that
 $\lambda_0=R(\alpha)>0$      and       thus      $\lambda_n<0$      for
 $n\in  I^*=I\setminus  \{0\}$;   and  denote  by  $(u_n,   n\in  I)$  a
 corresponding     sequence    of     orthogonal    eigenvectors     (in
 $L^2(\eta_\alpha   \rd  \mu)$).   The   functions  $v_\alpha=u_0$   and
 $\phi_\alpha=\eta_\alpha u_0$  are the right and  left-eigenvectors for
 $T_\alpha$  (seen   as  an   operator  on  $L^2(\mu)$)   associated  to
 $R(\alpha)$.

We now follow \cite{EffectivePertuBenoit} to get that
$\alpha \mapsto R(\alpha)=\rho(T_0 + \alpha T M)$ is analytic and
compute its second derivative. Let $\pi_\alpha$ be the projection on
the ($\braket{\cdot,\cdot}_\alpha$)-orthogonal of $v_\alpha$, and
define:
\[
  S_\alpha = (T_\alpha - R(\alpha))^{-1} \pi_\alpha.
\]
In other words, $S_\alpha$ maps $u_0$ to $0$ and $u_i$ to
$(\lambda_i - R(\alpha))^{-1}\, u_i$.
Let $\alpha\in(0,1)$ and $\varepsilon$ small enough so that
$\alpha+\varepsilon\in [0, 1]$. We have:
\[
    T_{\alpha+\varepsilon} = T_\alpha + \varepsilon T M,
  \]
  and thus $\norm{T_{\alpha+\varepsilon}
      -T_\alpha}_{L^2(\eta_\alpha \rm{d} \mu)}=O( \varepsilon)$.
    Using \cite[Theorem~2.6]{EffectivePertuBenoit} on the Banach space
    $L^2(\eta_\alpha\, \mathrm{d}\mu)$, we get that:
   \[
      R(\alpha + \varepsilon)
      = R(\alpha) + \varepsilon\braket{v_\alpha,T M v_\alpha}_\alpha
  - \varepsilon^2\braket{v_\alpha, T M S_\alpha T M v_\alpha}_\alpha
  + O(\varepsilon^3).
\]
Let $N_\alpha=M_{1/\eta_\alpha}M=MM_{1/\eta_\alpha}$ be the
multiplication by $(\eta_1-\eta_0)/\eta_\alpha$ bounded operator. 
Since $\alpha \mapsto R(\alpha)$ is analytic and $T$ is self-adjoint
(with respect to $\langle \cdot, \cdot \rangle$), we get that:
\begin{align*}
    R''(\alpha)
    &= - 2 \braket{v_\alpha,T M S_\alpha T M v_\alpha}_\alpha \\
    &= - 2 \braket{M T_\alpha v_\alpha, S_\alpha T M v_\alpha} \\
    &= - 2 R(\alpha) \braket{ M v_\alpha, S_\alpha T M v_\alpha}\\
    &= - 2 R(\alpha) \braket{ N_\alpha v_\alpha, S_\alpha T_\alpha N_\alpha v_\alpha}_\alpha.
  \end{align*}
  Since the kernel and the image of $T_\alpha$ are orthogonal (in
  $L^2(\eta_\alpha \rd \mu)$), and the latter is generated by
  $(u_n, n\in I)$, we have the decomposition
  $N_\alpha v_\alpha = g+ \sum_{n\in I} a_nu_n$ with
  $g\in \mathrm{Ker}(T_\alpha)$ and
  $a_n=\langle N_\alpha v_\alpha, u_n \rangle_\alpha/\langle u_n, u_n
  \rangle_\alpha$. This gives, with 
  $I^*=I\setminus \{0\}$:
\begin{equation}\label{eq:R2}
    R''(\alpha) = 2 R(\alpha) \sum_{n\in I^*} \frac{\lambda_n}{R(\alpha) - \lambda_n} \, a_n^2\, \braket{u_n,u_n}_\alpha. 
\end{equation}
Since $\lambda_n<0$ for all $n\in I^*$, we deduce that $R''(\alpha)\leq
0$ and thus $\alpha \mapsto R(\alpha)$ is 
concave on $[0, 1]$. This implies that $R_e[T]$ is concave. 

\begin{remark}\label{rem:cvxe2}
  The same proof with obvious changes gives that if $T$ is a positive
  quasi-irreducible compact self-adjoint operator 
  (see Section~\ref{sec:q-irr-monat} for the precise definition of 
  quasi-irreducible operator) such that $\nega(T)=0$, then $R_e[T]$
  is convex on $\Delta$. Then, using the decomposition of a compact
  operator on its irreducible atoms, see Section~\ref{sec:atomic} and
  more precisely Lemma~\ref{lem:Rei}, and since the maximum of convex
  functions is convex (used in~\eqref{eq:R=maxRi}), we can recover
  Theorem~\ref{th:hill-longini}~\ref{item:hill-longini-cvx}. 
\end{remark}

\section{The reproduction number and reducible positive compact operators}
\label{sec:reducible}

Following \cite{schwartz61},  we  present in Section~\ref{sec:atomic} the  atomic decomposition
of a positive compact operator $T$ on $L^p$ where $p\in [1, +\infty )$ and
state a formula which ``reduces''  the effective
reproduction function  of $T$ on the whole space to the ones of  the
restriction of $T$ to each atoms (or irreducible components); see Corollary~\ref{cor:Speci} below.
Then, we consider the notion of  quasi-irreducible and
monatomic operators in Section~\ref{sec:q-irr-monat}, and provide some
properties of monatomic operator and prove that if the effective
reproduction number is concave then the operator is monatomic. 

\subsection{Atomic decomposition} 
\label{sec:atomic}

Our  presentation is  a direct  application of  the Frobenius
decomposition, see \cite{victory82,  victory93} and \cite{schwartz61} or
the   ``super  diagonal''   form;  see   \cite[Part~II.2]{dowson}.   For
convenience, we  follow \cite{schwartz61} for positive  compact operator
on   $L^p(\mu)$   for   some   $p\in    [1,   +\infty   )$,   see   also
\cite[Lemma~5.17]{bjr} in the case  of integral operators with symmetric
kernel. We stress that the results in \cite{schwartz61} are stated under
the hypothesis that  $\mu$ is a finite measure, but  it is elementary to
check the  main results  (Theorems 7  and 8 therein) also  hold if  the measure
$\mu$ is $\sigma$-finite.

For  $A, B\in \cf$, we  write $A\subset B$
a.e.\ if $\mu(B^c  \cap A)=0$ and $A=B$ a.e.\ if  $A\subset B$ a.e.\ and
$B\subset A$ a.e..  Let $T$ be  a positive compact  operator on $L^p$ for
some  $p\in   [1,  +\infty   )$.  Let $f_0\in L^p$ and $g_0\in L^q$ be
positive functions and consider the operator $T_0=M_{g_0} T M_{f_0}$ from
$L^\infty $ to $L^1$. We  define   the  function   $\kk_T$  on
$\cf^{ 2}$ as, for $A, B\in \cf$:
\begin{equation}
   \label{eq:def-kT}
  \kk_T(B, A)=\int_B (T _0 \, \ind A)(x) \, \mu(\rd x)=
  \langle  \ind{B}, T_0\, 
   \ind{A} \rangle. 
\end{equation}
It is clear from~\eqref{eq:def-kT} that  the family of sets $(B,A)$ such
that  $\kk_T(B, A)=0$  does not  depend on  the choice  of the  positive
functions $f_0\in L^p$ and $g_0\in L^q$. If the measure $\mu$ is finite,
then one can take $f_0=g_0=\un$ and thus $T_0=T$.

A  set $A\in  \cf$ is  \emph{$T$-invariant}, or  simply \emph{invariant}
when there  is no ambiguity on  the operator $T$, if  $\kk_T(A^c, A)=0$.
We   recall   that   $\ci$   is   a  closed   ideal   of   $L^p$   (with
$p\in   [1,   +\infty   )$)   if   and  only   if   it   is   equal   to
$\ci_B=\{f\in  L^p\,  \colon\,  f\ind{B}=0\}$ for  some  measurable  set
$B\in \cf$,  see Example~2 in~\cite[Section~III.1]{schaefer_banach_1974}
or  \cite[Section~III.2]{zerner_87}.  Notice  that a  set $A\in  \cf$ is
invariant if  and only if the  ideal $\ci_A$ is invariant  for $T$, that
is, $T(\ci_A)\subset \ci_A$.

A positive  compact operator $T$ on  $L^p$ is (ideal)-\emph{irreducible}
if the  only closed invariant  ideal are  $\{0\}$ and $L^p$.   Thus, the
positive  compact   operator  $T$  is   irreducible  if  and   only  any
$T$-invariant set  $A$ is such  that either $\mu(A)=0$  or $\mu(A^c)=0$.
According      to      \cite[Theorem~3]{depagter_86}      (see      also
\cite[Section~III.3]{zerner_87}  for   an  elementary   presentation  in
$L^p$), if  $T$ is  an irreducible positive  compact operator  on $L^p$,
then either $R_0[T]>0$, or the situation is degenerate in the sense that
$\Omega$ is  an atom  of $\mu$  (that is,  for all  $A\in \cf$,  we have
either $\mu(A)=0$ or $\mu(A^c)=0$) and $T=0$.

Let $\ca$  be the set  of $T$-invariant sets,  and notice that  $\ca$ is
stable   by  countable   unions   and   countable  intersections.    Let
$\cfi=\sigma(\ca)$  be the  $\sigma$-field  generated  by $\ca$.   Then,
thanks to~\cite[Theorem~8]{schwartz61},  the operator $T$  restricted to
an atom of  $\mu$ in $\cfi$ is irreducible.  We  shall only consider non
degenerate atom, and say the atom (of  $\mu$ in $\cfi$)  is non-zero if the restriction of the
operator $T$ to this atom has a positive spectral radius.
We denote by $(\Omega_i, i\in  I)$  the at most countable (but possibly
empty) collection  of non-zero  atoms of  $\mu$ in $\cfi$. 
 Notice
that the atoms are  defined up to an a.e.\ equivalence  and can be chosen
to be pair-wise disjoint.  For $i\in I$, we set:
\begin{equation}\label{eq:ki}
  T_i=M_{\Omega_i} \, T\, M_{\Omega_i},
\end{equation}
which is a positive   compact   operator  on   $L^p$. Note that:
\begin{equation}
   \label{eq:def-T'}
  T\geq T'
  \quad\text{where}\quad
T'=  \sum_{i\in I} T_i.
\end{equation}

We now give some properties of the Frobenius decomposition. 
\begin{remark}[Properties of Frobenius decomposition]
  \label{rem:Frob0}
We have, with $i\in I$:
\begin{enumerate}[(i)]

\item By definition of the non-zero atoms:  $\mu(\Omega_i)>0$ and $T$ restricted to $\Omega_i$ is irreducible
  with   positive
  spectral radius, that is, $R_0[T_i]>0$.
\item According to \cite[Theorem~8]{schwartz61}, the spectral radius of
  $T_i$ is a simple eigenvalue of $T_i$: $\mult(R_0(T_i), T_i)=1$. 
\item According to \cite[Theorem~7]{schwartz61}, for all $\lambda\in
  \C^*$, we have:
 \begin{equation}
   \label{eq:mult-vpFrob}
   \mult(\lambda, T)= \sum_{j\in I} \mult(\lambda, T_j).
  \end{equation} 
\item    Consider    the    complement    of    the    non-zero
  atoms, say 
  $\Omega_0=\left(\cup_{j\in I} \Omega_j\right)^c$  (with the convention
  that $0$ does  not belong to the set of  indices $I$). Then, the restriction
  of    $T$    to    $\Omega_0$     is    quasi-nilpotent,    that    is
  $R_e[T](\ind{\Omega_0})=0$.  
\end{enumerate}

From those properties, we deduce the following elementary results. 

  \begin{enumerate}[(i),resume]
 \item\label{item:Frob} The cardinal of the set of indices $i\in I$ such that
    $R_0[T_i]=R_0[T]$ is exactly equal to the multiplicity of $R_0[\kk]$ for $T$,
    that is $\mult(R_0[T], T)$.
  \item   There   exists   at   least  one  non-zero   atom   ($\sharp I\geq 1$) if and only if $R_0[T]>0$.
  \item The operator $T$ is quasi-nilpotent  if and only if there is 
    no non-zero  atom ($\sharp I= 0$).

  \item \label{rem:Aci-nv} If $A\in \cf$ invariant implies $A^c$
    invariant (which is in particular the case if $T$  is self-adjoint and $p=2$), then we have
    $T=\sum_{i\in I} T_i$ and thus the restriction of $T$  to $\Omega_0$
    is zero (intuitively $T$ is block diagonal). 

   \end{enumerate}
\end{remark}

\begin{remark}\label{rem:tilde-T}
  Assume that  $T=T_\kk$ is  an integral operator  with kernel  $\kk$ on
  $\Omega=[0, 1]$;  see Remark~\ref{rem:op-noyau}.  Then,  the operators
  $T_i$ are integral  operators with respective kernel  $\kk_i$ given by
  $\kk_i(x,y)=\ind{\Omega_i}(x)  \,  \kk(x,y)\, \ind{\Omega_i}(y)$;  and
  the operator $T'=T_{  \kk'}$ is also an integral  operator with kernel
  $\kk'=\sum_{i\in I} \kk_i$.   We represent in Figure~\ref{fig:atomic1}
  an example  of a kernel  $\kk$ with  its atomic decomposition  using a
  ``nice'' order on $\Omega$ (see \cite{victory82, victory93, dowson} on
  the existence  of such  an order relation;  intuitively the  kernel is
  upper block triangular: the population on the ``left'' of an atom does
  not  infect the  population  on the  ``right'' of  this  atom) and  in
  Figure~\ref{fig:atomic2}  the corresponding  kernel  $ \kk'$.   Notice
  that $\kk(\Omega_i, \Omega_j)=0$ for $j$ ``smaller'' that $ i $, where
  $\kk(A, B)=\int_{\Omega^2} \ind{A}(x)  \kk(x,y)\ind{B}(y)\, \mu(\rd x)
  \mu(\rd y)$ is a consistent notation with~\eqref{eq:def-kT}.
\end{remark}

\begin{figure}
  \begin{subfigure}[T]{.5\textwidth}
    \centering
    \includegraphics[page=1]{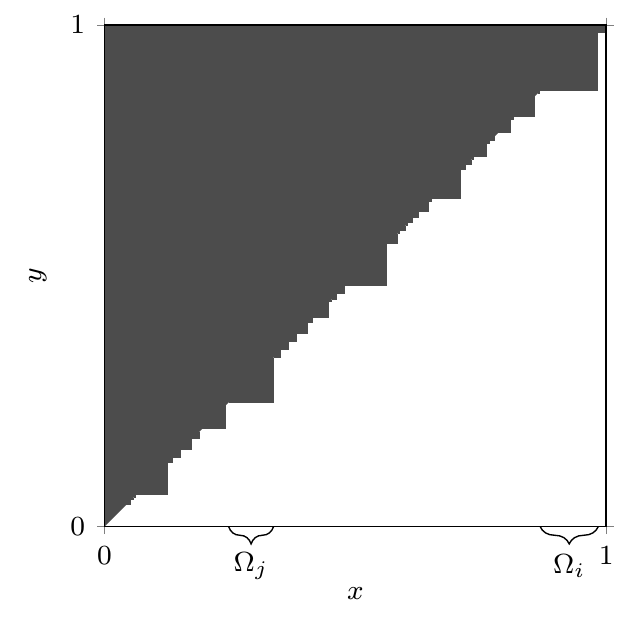}
    \caption{A representation of the kernel $\kk$ with the white zone included in
    $\{\kk=0\}$.}
    \label{fig:atomic1}
  \end{subfigure}%
  \begin{subfigure}[T]{.5\textwidth}
    \centering
    \includegraphics[page=2]{reducible}
    \caption{A representation of the kernel $\kk'=\sum_{i\in I}
      \kk_i$ with the white zone included in
    $\{\kk'=0\}$.}
    \label{fig:atomic2}
  \end{subfigure}
  \caption{Example of a  kernel $\kk$ on $\Omega=[0, 1]$  and the kernel
    $\kk'          =\sum_{i\in          I}         \kk_i$,          with
    $\kk_i(x,y)=\ind{\Omega_i}(x)\,  \kk(x,y)\,  \ind{\Omega_i}(y)$ and
    $(\Omega_i, i\in I)$ the non-zero atoms.  We
    have         $\spec[T_\kk]=\spec[T_{\kk'}]$         and         thus
    $R_e[T_\kk]=R_e[ T_{\kk'}]$.}
  \label{fig:atomic}
\end{figure}

For $i\in I$ and $\eta\in \Delta$, we set $\eta_i=\eta \ind{\Omega_i}$ and recall that
$T_i=M_{\Omega_i}\, T\, M_{\Omega_i}$. We now give the decomposition of $R_e[T]$
according to the irreducible components $(\Omega_i, i\in I)$ of $T$.

\begin{lemma}\label{lem:Rei}
Let   $T$  be   a  positive   compact   operator  on   $L^p$  for   some
$p\in  [1, +\infty  )$. With the convention that $\max_\emptyset=0$, we have
  for $\eta\in \Delta$:
  \begin{equation}\label{eq:R=maxRi}
    R_e[T](\eta)
    =\max_{i\in I} R_e[T_i](\eta_i)
    =\max_{i\in I} R_e[T_i](\eta)
    =\max_{i\in I} R_e[T](\eta \ind{\Omega_i}),
  \end{equation}
  and more generally:
  \begin{equation}
    \label{eq:mult-vpFrob3}
     \mult(\lambda, T M_\eta)=\sum_{i\in I} \mult(\lambda, T_i M_\eta)
    \quad\text{for all $\lambda\in \C^*$.}
 \end{equation} 
 \end{lemma}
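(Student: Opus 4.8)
The plan is to establish the multiplicity formula~\eqref{eq:mult-vpFrob3} first and then read off the spectral-radius identity~\eqref{eq:R=maxRi}. Before that, I record that the three right-hand sides of~\eqref{eq:R=maxRi} already coincide by elementary manipulations. Since $M_\eta$ and $M_{\Omega_i}$ are commuting multiplication operators and $M_{\Omega_i}$ is idempotent, one checks $T_i M_\eta = M_{\Omega_i} T M_{\eta_i} = T_i M_{\eta_i}$, so that $R_e[T_i](\eta) = R_e[T_i](\eta_i) = \rho(T_i M_{\eta_i})$; and writing $M_{\eta_i} = M_{\Omega_i} M_{\eta_i} M_{\Omega_i}$ and using the cyclic invariance of the nonzero spectrum from Lemma~\ref{lem:prop-spec-mult}~\ref{item:spec-adjoint-mult} (equation~\eqref{eq:r(AB)-mult}) gives $\rho(T_i M_{\eta_i}) = \rho(T M_{\eta_i}) = R_e[T](\eta\ind{\Omega_i})$. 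Thus all three maxima equal $\max_{i\in I}\rho(T_i M_\eta)$, and it remains to prove $\rho(T M_\eta)=\max_{i\in I}\rho(T_i M_\eta)$, which I deduce from~\eqref{eq:mult-vpFrob3}.

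To prove~\eqref{eq:mult-vpFrob3}, set $S = T M_\eta$, a positive compact operator on $L^p$. The first observation is that every $T$-invariant set is $S$-invariant: for $A\in\cf$, the quantity defined in~\eqref{eq:def-kT} satisfies $0 \le \kk_S(A^c,A) \le \kk_T(A^c,A)$, since $0 \le S\ind{A} = T(\eta\ind{A}) \le T\ind{A}$ by positivity of $T$ and $\eta\le\un$. Hence $\ca\subseteq\ca_S$ and $\cfi=\sigma(\ca)\subseteq\sigma(\ca_S)$, so the non-zero atoms $(\Omega'_j)_{j\in J}$ of $S$ refine the atoms of $T$. Moreover $S$ restricted to $\Omega_0=(\cup_i\Omega_i)^c$ is quasi-nilpotent: $M_{\Omega_0} S M_{\Omega_0} \le M_{\Omega_0} T M_{\Omega_0}$, whose spectral radius is $R_e[T](\ind{\Omega_0})=0$ by Remark~\ref{rem:Frob0}, so by monotonicity (Lemma~\ref{lem:prop-spec-mult}~\ref{item:A-B}) no non-zero atom of $S$ meets $\Omega_0$. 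Since each $\Omega'_j$ is an atom of $\sigma(\ca_S)$ and each $\Omega_i$ lies in this $\sigma$-field, every non-zero atom $\Omega'_j$ is contained, up to a null set, in a unique $\Omega_{i(j)}$.

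Applying the Frobenius decomposition~\eqref{eq:mult-vpFrob} to $S$ and regrouping the atoms by the containing $\Omega_i$ gives, for $\lambda\in\C^*$, $\mult(\lambda, S)=\sum_{i\in I}\sum_{j:\,\Omega'_j\subseteq\Omega_i}\mult(\lambda, M_{\Omega'_j} S M_{\Omega'_j})$. For fixed $i$ and $\Omega'_j\subseteq\Omega_i$ we have $M_{\Omega'_j} S M_{\Omega'_j} = M_{\Omega'_j}(M_{\Omega_i} S M_{\Omega_i})M_{\Omega'_j}$ and $M_{\Omega_i} S M_{\Omega_i} = T_i M_\eta$, so the inner sum is exactly the Frobenius sum~\eqref{eq:mult-vpFrob} for the operator $T_i M_\eta$, \emph{provided} its non-zero atoms are precisely the $\Omega'_j$ contained in $\Omega_i$. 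Granting this, the inner sum equals $\mult(\lambda, T_i M_\eta)$ and~\eqref{eq:mult-vpFrob3} follows. Then~\eqref{eq:R=maxRi} is immediate: by~\eqref{eq:mult-vpFrob3} the nonzero eigenvalues of $TM_\eta$ are exactly those of the $T_iM_\eta$, whence $\rho(TM_\eta)=\max_i\rho(T_iM_\eta)$ when some atom is present, while if $I=\emptyset$ then $TM_\eta\le T$ is quasi-nilpotent and both sides vanish, matching $\max_\emptyset=0$.

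The main obstacle is the localization claim used above: that the non-zero atoms of the restricted operator $T_i M_\eta = M_{\Omega_i} S M_{\Omega_i}$ are exactly the non-zero atoms of $S$ lying inside $\Omega_i$. One inclusion is easy from the same positivity estimate: if $A$ is $S$-invariant then $0\le\kk_S(\Omega_i\cap A^c,\Omega_i\cap A)\le\kk_S(A^c,A)=0$, so the trace $A\cap\Omega_i$ is invariant for $M_{\Omega_i} S M_{\Omega_i}$, showing that $\sigma(\ca_S)$ traced on $\Omega_i$ is contained in the invariant $\sigma$-field of $T_iM_\eta$. The reverse inclusion---extending an internally invariant subset $B\subseteq\Omega_i$ to a genuinely $S$-invariant set by adjoining its downstream mass in $\Omega_i^c$---is where the ordered, ``upper-triangular'' structure of the Frobenius decomposition (see Remark~\ref{rem:tilde-T} and~\cite{schwartz61}) must be invoked; this is the delicate point, and everything else is bookkeeping with cyclic invariance and positivity.
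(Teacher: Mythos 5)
Your reduction of the three right-hand maxima to $\max_{i\in I}\rho(T_iM_\eta)$ is correct and matches the paper's closing step (which invokes Lemma~\ref{lem:hk/h}~\ref{lem:hk=kh}), and your observation that $\kk_{TM_\eta}(B,A)\le \kk_T(B,A)$, so that every $T$-invariant set remains invariant for $TM_\eta$, is exactly the right starting point. But the core of your argument rests on a claim you explicitly leave unproven: that the non-zero atoms of $T_iM_\eta=M_{\Omega_i}(TM_\eta)M_{\Omega_i}$ are precisely the non-zero atoms of $S=TM_\eta$ contained in $\Omega_i$. This is a genuine gap, not bookkeeping. A set $B\subset\Omega_i$ can be invariant for the restricted operator $M_{\Omega_i}SM_{\Omega_i}$ (no communication from $B$ to $\Omega_i\setminus B$) without being extendable to an $S$-invariant subset of $\Omega$, so the invariant $\sigma$-field of $T_iM_\eta$ may be strictly finer than the trace of $\sigma(\ca_S)$ on $\Omega_i$; identifying the two is essentially as hard as the lemma itself, and would force you to redo the ordering argument of \cite{schwartz61} for each restricted operator.

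The paper avoids this entirely by never introducing the atoms of $S$ or of $T_iM_\eta$. The additivity statement \cite[Lemma~11]{schwartz61}, namely $\mult(\lambda,T',A\cup B)=\mult(\lambda,T',A)+\mult(\lambda,T',B)$ whenever $A\cap B=\emptyset$ a.e.\ and $\kk_{T'}(B,A)=0$, together with the telescoping argument in the proof of \cite[Theorem~7]{schwartz61}, only requires the one-way non-communication condition for the operator at hand --- it does not require the pieces to be atoms of that operator. Since $\kk_{TM_\eta}\le\kk_T$, the fixed decomposition of $\Omega$ into the $T$-atoms $(\Omega_i, i\in I)$ already satisfies this condition for $TM_\eta$, and one sums the multiplicities of $TM_\eta$ directly over the $\Omega_i$ to obtain \eqref{eq:mult-vpFrob2} and hence \eqref{eq:mult-vpFrob3}; the spectral-radius identity then follows as you describe. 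If you want to salvage your route, replace the regrouping-by-atoms-of-$S$ step by this direct application of Schwartz's additivity to $TM_\eta$ along the $T$-decomposition.
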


Before proving the lemma, we first state a  direct consequence
 of~\eqref{eq:mult-vpFrob3},        in        the       spirit        of
 Section~\ref{sec:equivalent} on a spectrum-preserving
 transformation. Recall $T'= \sum_{i\in I} T_i$ in~\eqref{eq:def-T'}. 
\begin{corollary}
  \label{cor:Speci}
  Let   $T$  be   a  positive   compact   operator  on   $L^p$  for   some
  $p\in  [1, +\infty  )$.
  We have:
  \[
       \spec[T]= \spec[T']=\bigcup _{i\in I}
       \spec[T_i]  \quad\text{and}\quad 
      R_e[T]=R_e\left[T'\right]= \max_{i\in I}   R_e[T_i] .
    \]
\end{corollary}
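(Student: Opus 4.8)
The plan is to read off both chains of equalities from Lemma~\ref{lem:Rei}, which already packages essentially all the required information; the corollary is then mostly a matter of book-keeping. Throughout I fix $\eta\in\Delta$ and work with the compact operators $TM_\eta$, $T'M_\eta$ and the $T_iM_\eta$. Since $R_e=\mathrm{rad}\circ\spec$ by~\eqref{eq:def-R_e} and $\mathrm{rad}\left(\bigcup_{i}K_i\right)=\max_i\mathrm{rad}(K_i)$ for any family of compact sets, each $R_e$-identity follows automatically from the corresponding $\spec$-identity by applying $\mathrm{rad}$; alternatively the equality $R_e[T]=\max_{i}R_e[T_i]$ is literally the second equality in~\eqref{eq:R=maxRi}. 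So I will concentrate on the spectra.

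First I would prove $\spec[T]=\bigcup_{i\in I}\spec[T_i]$. For $\lambda\in\C^*$, Equation~\eqref{eq:mult-vpFrob3} gives $\mult(\lambda,TM_\eta)=\sum_{i\in I}\mult(\lambda,T_iM_\eta)$, so $\lambda$ is an eigenvalue of $TM_\eta$ if and only if it is an eigenvalue of some $T_iM_\eta$; since the non-zero spectrum of a compact operator is exactly its set of non-zero eigenvalues, this identifies $\spec(TM_\eta)\cap\C^*$ with $\bigcup_i\spec(T_iM_\eta)\cap\C^*$. It then remains to treat $\lambda=0$: in infinite dimension $0$ lies in the spectrum of every compact operator, so both sides contain $0$; the degenerate case $I=\emptyset$, where $T$ is quasi-nilpotent by Remark~\ref{rem:Frob0} and $\spec(TM_\eta)=\{0\}$, is covered by the convention $\max_\emptyset=0$.

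Next I would establish $\spec[T']=\bigcup_{i\in I}\spec[T_i]$. The structural point is that, because the atoms are pairwise disjoint, $M_{\Omega_i}T_jM_{\Omega_i}=M_{\Omega_i\cap\Omega_j}\,T\,M_{\Omega_i\cap\Omega_j}$ vanishes for $i\neq j$, whence $M_{\Omega_i}T'M_{\Omega_i}=T_i$ and $M_{\Omega_0}T'M_{\Omega_0}=0$, where $\Omega_0$ is the complement of the atoms. Thus $T'M_\eta$ is block-diagonal along $\Omega=\Omega_0\sqcup\bigsqcup_i\Omega_i$, with blocks $T_iM_\eta$ and a zero block on $\Omega_0$; each $\Omega_i$ is $T'$-invariant and the restriction of $T'$ to it is the irreducible operator $T_i$. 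Consequently $T'$ admits exactly the same non-zero atoms $(\Omega_i,i\in I)$ with the same irreducible restrictions $T_i$ as $T$, so~\eqref{eq:mult-vpFrob3} applies verbatim to $T'$ and yields $\mult(\lambda,T'M_\eta)=\sum_i\mult(\lambda,T_iM_\eta)$ for $\lambda\in\C^*$, giving the claimed equality (with $0$ added by compactness). Combining the two chains gives $\spec[T]=\spec[T']=\bigcup_i\spec[T_i]$, and taking $\mathrm{rad}$ delivers $R_e[T]=R_e[T']=\max_i R_e[T_i]$.

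The main obstacle I anticipate is the rigorous identification of the atomic decomposition of $T'$ with that of $T$, so that Lemma~\ref{lem:Rei} can be invoked for $T'$ and not only for $T$. The disjointness identity $M_{\Omega_i}T_jM_{\Omega_i}=M_{\Omega_i\cap\Omega_j}TM_{\Omega_i\cap\Omega_j}$ is precisely what reduces this to the explicit block-diagonal structure above; the remaining points, namely the treatment of the accumulation value $\lambda=0$ and the empty-index convention, are minor but must be stated explicitly to make the set equalities (rather than merely equalities on $\C^*$) literally correct.
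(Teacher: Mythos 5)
Your argument is correct and coincides with the paper's: the corollary is presented there as a direct consequence of~\eqref{eq:mult-vpFrob3} and~\eqref{eq:R=maxRi} with no further proof, and your identification of the atomic decomposition of $T'$ with that of $T$ via the block identity $M_{\Omega_i}T'M_{\Omega_i}=T_i$ supplies exactly the detail the paper leaves implicit. The only caveat, which the paper shares, is that the inclusion of $\lambda=0$ in the stated set equalities is really only guaranteed in infinite dimension.
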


\begin{proof}[Proof of Lemma~\ref{lem:Rei}]
  Let $T'$  be a positive compact  operator on $L^p$. Recall  the kernel
  $\kk_{T'}  $  defined  in~\eqref{eq:def-kT}.    For  $A\in  \cf$,  let
  $\mult(\lambda, T', A)$ denote the  multiplicity (possibly equal to 0)
  of the  eigenvalue $\lambda\in \C^*$  for the operator $T'\,  M_A$.  A
  direct  application  of  \cite[Lemma~11]{schwartz61} (which holds also
  if $\mu$ is a $\sigma$-finite measure) gives  that  for
  $A,   B\in  \cf$    such   that  $A\cap   B=\emptyset$  a.e.\   and
  $\kk_{T'}(B,A)=0$, we have for all $\lambda\in \C^*$ that:
  \begin{equation}
    \label{eq:mult=sum-mult}
    \mult (\lambda, T', A\cup B)= \mult(\lambda, T', A) + \mult(\lambda, T', B), 
  \end{equation}
  and thus
  \begin{equation}
    \label{eq:R0=maxR0}
    R_e[T'](\ind{A}+ \ind{B})=\max \big( R_e[T'](\ind{A}),R_e[T'](\ind{B})
    \big).
  \end{equation}

\medskip

  Let $A, B\in \cf$ be such that $A\cap
  B=\emptyset$ a.e.\ and $\kk_T(B,A)=0$. Let $\eta \in \Delta$. Clearly
  we have $\kk_{T M_\eta}
  (B, A)\leq\kk_T
  (B, A) $ and thus $\kk_{T M_\eta}
  (B, A) =0$. Use~\eqref{eq:mult=sum-mult} to get  that for  $\eta\in
  \Delta$ and $\lambda\in \C^*$:
  \[
    \mult(\lambda, TM_\eta, A\cup B)= \mult(\lambda, T M_ \eta, A) + \mult(\lambda,
    TM_ \eta, B). 
  \]
  Then, an immediate adaptation of the proof of \cite[Theorem~7]{schwartz61} gives that
  for all $\lambda\in \C^*$:
  \begin{equation}
    \label{eq:mult-vpFrob2}
    \mult(\lambda, TM_ \eta, \Omega)=\sum_{i\in I} \mult(\lambda, TM_ \eta, \Omega_i).
  \end{equation} 
  By definition of $\mult(\lambda, \cdot, \cdot)$, we get $
  R_e[T](\eta)=\max \{|\lambda|\, \colon\, \mult(\lambda, TM_ \eta,
  \Omega)>0\}$ and $
  R_e[TM_{\Omega_i}](\eta)= \max \{|\lambda|\, \colon\, \mult(\lambda,
  TM_ \eta, \Omega_i)>0\}$.
  This gives that:
  \[
    R_e[T](\eta)= \max_{i\in I} R_e[TM_{\Omega_i}](\eta ).
  \]
  To conclude, notice, using  Lemma~\ref{lem:hk/h}~\ref{lem:hk=kh}
  for the second equality, that:
  \[
R_e[T](\eta \ind{\Omega_i} )=R_e[TM_{\Omega_i}](\eta
  )=R_e[M_{\Omega_i} TM_{\Omega_i}](\eta )=R_e[T_i](\eta   )=R_e[T_i](\eta_i ).
\]
Similarly we deduce~\eqref{eq:mult-vpFrob3}
from~\eqref{eq:mult-vpFrob2}. 
\end{proof}

\subsection{Monatomic operators and applications}
\label{sec:q-irr-monat}

Following \cite[Definition~2.11]{bjr},  a positive compact  operator $T$ is  
\emph{quasi-irreducible} if there exists a measurable set $\oa\subset \Omega$ 
such that $\mu(\oa)>0$, $T=M_{\oa} T  M_{\oa}$ and $T$ restricted to $\oa$ is 
irreducible with positive spectral radius. The quasi-irreducible property is natural in
the setting  of positive compact  self-adjoint operators; in a more general setting,
one  would still  want to consider positive compact  operator  with  only  one
irreducible component. This motivates the next definition. Recall the atomic
decomposition of the previous section. 

\begin{definition}[Monatomic operator]\label{def:monatomic}
  Let  $T$ be a positive compact operator on $L^p$ with some $p\in [1,  +\infty )$. The
  operator is monatomic if there exists a unique non-zero atom ($\sharp I=1$).
\end{definition}

In   a   sense,  the   operator   $T$   is  ``truly   reducible''   when
$\sharp  I\geq  2$.    We  shall  give  in  a   forthcoming  work  other
characterizations of monatomic operator.
  
\begin{remark}[Link between (quasi-)irreducible and monatomic operators]\label{rem:quasi-irr}
  Irreducible   positive compact operators with    positive   spectral    radius    
   and quasi-irreducible   positive compact operators are    monatomic,
   and we have $T=\Ta$ where  $\Ta=M_{\oa} \, T\, M_{\oa}$ and $\oa$ is
   the non-zero atom, with $\oa=\Omega$ in the reducible case. 
\end{remark}

\begin{figure}
  \begin{subfigure}[T]{.5\textwidth}
    \centering
    \includegraphics[page=1]{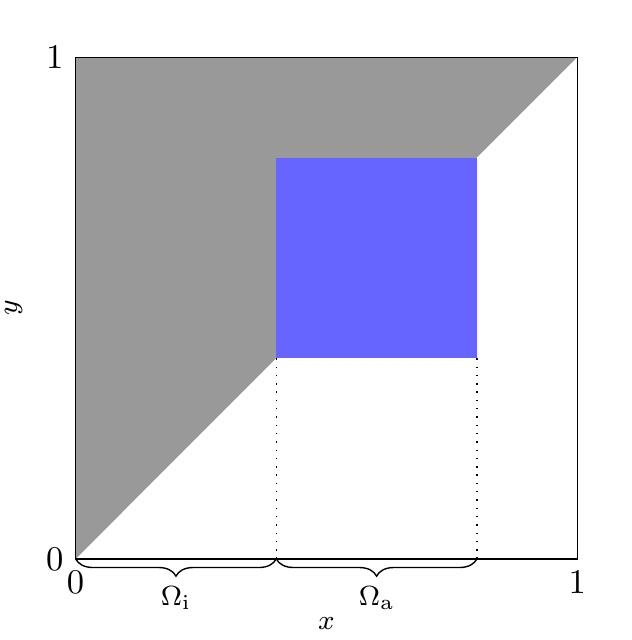}
    \caption{A representation of a monatomic kernel.}
    \label{fig:monatomic}
  \end{subfigure}%
  \begin{subfigure}[T]{.5\textwidth}
    \centering
    \includegraphics[page=2]{monatomic}
    \caption{A representation of a quasi-irreducible kernel.}
    \label{fig:quasi}
  \end{subfigure}
  \caption{Example of kernels $\kk$ and $\ka$ of a monatomic integral
    operator $T_\kk$ and the  quasi-irreducible
    integral operator $\Ta=T_{\ka}$ on $\Omega=[0, 1]$, with non-zero atom
    $\oa$. The kernels are zero on
    the white zone and  are irreducible when restricted to the zone.}
  \label{fig:monatomic-quasi}
\end{figure}

\begin{remark}[Reducibility for integral operators]\label{rem:quasi-irr2}
  We  consider  an integral  operator  $T_\kk$  with kernel  $\kk$,  see
  Remark~\ref{rem:op-noyau}, and we say the kernel $\kk$ is irreducible, quasi-irreducible
  or monatomic  whenever  the   integral operator $T_\kk$  satisfies  the corresponding
  property.   Then, the  notion of irreducibility  of a  kernel  depends only  on  its
  support.   Indeed, provided that the  measure $\mu$ is finite and the  kernel so that
  all the  operators are  well  defined  and compact,  the  kernel $\kk$  is irreducible
  (resp.\  quasi-irreducible, resp.\  monatomic) if  and only if    the   kernel   
  $\ind{\{\kk>0\}}$    is   irreducible    (resp.\ quasi-irreducible, resp.\ monatomic).  
  Furthermore, the corresponding integral operators have the same atoms.

  We  have represented  in Figure~\ref{fig:monatomic}  a monatomic  kernel $\kk$ on
  $\Omega=[0, 1]$ and  in Figure~\ref{fig:quasi} the kernel $\ka$ (with 
  $\ka(x,y)=\ind{\oa}(x)\kk(x,y)\ind{\oa}(y)$)  associated  to  the quasi-irreducible 
  integral operator  $\Ta=M_{\oa} \,  T_\kk\, M_{\oa}$; the  set   $\Omega=[0,  1]$  being
  ``nicely  ordered''  so   that  the representation of the kernels are upper triangular.
  Using the  epidemic interpretation of Remark~\ref{rem:lien-epidemie}  below, we also
  represented the subset  $\oi$ of the population infected  by the non-zero atom $\oa$. 
\end{remark}

\begin{remark}[Epidemiological interpretation]\label{rem:lien-epidemie}
  In the infinite dimensional SIS    model   developed    in
  \cite{delmas_infinite-dimensional_2020},   the  space
   $(\Omega, \cf, \mu)$ represents all the traits of the population with
   $\mu(\rd  y)$ the  infinitesimal size  of the  population with  trait
   $y$. The next-generation operator is given by  the integral operator
   $T_{\kk}$,    see     Equation~\eqref{eq:def-op-int},    where    the
   kernel~$\kk=k/\gamma$  is defined  in  terms of  a transmission  rate
   kernel   $k$   and  a   recovery   rate   function~$\gamma$  by   the
   formula~$\kk(x,y)=k(x,y)/\gamma(y)$ and has a finite double norm in
   $L^p$ for some $p\in(1, +\infty )$; and
   the basic reproduction number~$R_0=R_0[T_\kk]$ is  then the spectral radius
   of~$T_{\kk}$.  
  Intuitively,  $\kk(x,y)>0$  (resp.\  $=0$)  means  that individuals  with   trait  $y$  
  can  (resp.\ cannot)  infect individual with trait $x$.

   When the integral operator $T_\kk$ is monatomic, with  non-zero  atom $\oa$, then the population
   with trait in $\oa$ can infect  itself as well as the population with
   other distinct traits,  say $\oi$. The population with trait $\oi$ can only
   infect itself (but not $\oa$!); and there is no persistent epidemic
   outside $\oa\cup\oi$. We shall see in a forthcoming paper that the
   set $\oa\cup\oi$ is characterized as the smallest invariant set
   containing the atom $\oa$. 
\end{remark}

From Lemma~\ref{lem:Rei}, we deduce the following two results related to
monatomic operators.

\begin{lemma}\label{lem:R0simple}
  Let  $T$   be  a  positive   compact  operator  on  $L^p$   with  some
  $p\in [1,  +\infty )$, and  set $R_0=R_0[T]$.  If the operator  $T$ is
  monatomic   then   $R_0>0$   and    $R_0$   is   simple   (\textit{i.e.},
  $\mult(R_0, T)=1$).  If $R_0$ is simple and the only eigenvalue in
  $(0, +\infty )$, then the operator $T$ is monatomic.
\end{lemma}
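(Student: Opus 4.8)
The plan is to read off both implications directly from the Frobenius atomic decomposition and its properties collected in Remark~\ref{rem:Frob0} and Lemma~\ref{lem:Rei}. Write $(\Omega_i, i\in I)$ for the non-zero atoms of $T$ and $T_i=M_{\Omega_i} T M_{\Omega_i}$, and recall from Remark~\ref{rem:Frob0} that $\sharp I\geq 1$ if and only if $R_0>0$, that each $R_0[T_i]$ is a simple eigenvalue of the irreducible operator $T_i$, that multiplicities add over atoms through~\eqref{eq:mult-vpFrob}, and that $\mult(R_0,T)$ equals the number of indices $i\in I$ with $R_0[T_i]=R_0$.

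For the first implication, suppose $T$ is monatomic, so $I=\{i_0\}$ is a singleton. Since $\sharp I\geq 1$, the equivalence in Remark~\ref{rem:Frob0} gives $R_0>0$. Evaluating~\eqref{eq:R=maxRi} at $\eta=\un$ yields $R_0=\max_{i\in I} R_0[T_i]=R_0[T_{i_0}]$, so the only index with $R_0[T_i]=R_0$ is $i_0$; by Remark~\ref{rem:Frob0} this index count is exactly $\mult(R_0,T)$, hence $\mult(R_0,T)=1$ and $R_0$ is simple.

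For the converse, assume $R_0$ is simple and is the only eigenvalue lying in $(0,+\infty)$; in particular $R_0\in(0,+\infty)$, so $R_0>0$ and $\sharp I\geq 1$. Since $\mult(R_0,T)=1$, exactly one atom, say $\Omega_{i_0}$, satisfies $R_0[T_{i_0}]=R_0$. I would then argue by contradiction that no other atom can exist: if $j\in I$ with $j\neq i_0$, then $\Omega_j$ is a non-zero atom, so $T_j$ is irreducible with $R_0[T_j]>0$, and $R_0[T_j]$ is a simple eigenvalue of $T_j$. By the additivity~\eqref{eq:mult-vpFrob}, $\mult(R_0[T_j],T)\geq \mult(R_0[T_j],T_j)\geq 1$, so $R_0[T_j]$ is an eigenvalue of $T$ lying in $(0,+\infty)$. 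As $R_0=\max_{i\in I} R_0[T_i]$ and only $i_0$ attains it, we have $R_0[T_j]<R_0$, contradicting the assumption that $R_0$ is the unique eigenvalue in $(0,+\infty)$. Hence $I=\{i_0\}$ and $T$ is monatomic.

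The step I expect to be the most delicate is ensuring in the converse that the sub-radius $R_0[T_j]$ genuinely produces a positive real eigenvalue of the ambient operator $T$, and not merely of the restriction $T_j$: this is precisely where the multiplicity-additivity formula~\eqref{eq:mult-vpFrob} and the Krein--Rutman simplicity of $R_0[T_j]$ for each irreducible atom are essential. The remainder is bookkeeping with~\eqref{eq:R=maxRi} and with the identification of $\mult(R_0,T)$ as the number of maximizing atoms.
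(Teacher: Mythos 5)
Your proof is correct and follows essentially the same route as the paper: both directions rest on the Frobenius atomic decomposition, the additivity of multiplicities over atoms in~\eqref{eq:mult-vpFrob}, and the fact that each non-zero atom contributes a simple positive eigenvalue (de Pagter/Krein--Rutman), with your use of Remark~\ref{rem:Frob0}\ref{item:Frob} merely packaging the same facts the paper invokes directly. The only cosmetic difference is that you argue the converse by contradiction where the paper argues by contraposition.
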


\begin{proof}
  Let $T$ be monatomic, so that  there exists only one non-zero atom, say
  $\oa$.  Set $\Ta=M_{\oa}  T M_{\oa}$.  Since the  restriction of $\Ta$
  (or  $T$)  to  $\oa$  is  irreducible and  non-zero,  we  deduce  from
  \cite[Theorem~3]{depagter_86}  that its  spectral radius  is positive,
  and thus  $R_0[\Ta]>0$.  Using Lemma~\ref{lem:Rei}, this  implies that
  $R_0[T]=R_0[\Ta]>0$.   According  to \cite[Theorem~8]{schwartz61},  we
  get   that  $R_0[\Ta]$   is   simple  for   $\Ta$.   Since   according
  to~\eqref{eq:mult-vpFrob}  $ \mult(\lambda,  T)= \mult(\lambda,  \Ta)$
  for all $\lambda\in \C^*$, we deduce that $R_0[T]$ is simple for $T$.

  For the  second part, if  $T$ is not  monatomic and $R_0[T]>0$, we deduce  that there
  exists at  least two  non-zero atoms,  and thus  $\sharp I\geq  2$ (if
  there  is no  non-zero atom,  then  $T$ would  be quasi-nilpotent  and
  $R_0[T]=0$).  The  restrictions of  $T$ to  those non-zero  atoms have
  positive  spectral radius  according to  \cite[Theorem~3]{depagter_86}
  and  thus  at  least  one  positive  eigenvalue  by  the  Krein-Rutman
  theorem. We  deduce from~\eqref{eq:mult-vpFrob} that $T$  has at least
  two  positive eigenvalues  (counting  their multiplicity  if they  are
  equal).  This gives the result by contraposition.
\end{proof}

\begin{lemma}\label{lem:conc-mono}
  Let  $T$   be  a   positive  compact  operator   on  $L^p$   for  some
  $p\in [1, +\infty )$ such that $R_0[T]>0$.  If
  the function $R_e[T]$ is concave  on $\Delta$, then the operator $T$ 
  is monatomic.
\end{lemma}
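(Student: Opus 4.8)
The plan is to argue by contradiction using the atomic decomposition of Section~\ref{sec:atomic}, the reduction formula~\eqref{eq:R=maxRi} of Lemma~\ref{lem:Rei}, and the degree-one homogeneity of $R_e$ from Proposition~\ref{prop:R_e}~\ref{prop:normal}. Since $R_0[T]>0$, Remark~\ref{rem:Frob0} yields at least one non-zero atom, so $\sharp I\geq 1$; the whole point is therefore to exclude $\sharp I\geq 2$. Suppose then that there exist two distinct non-zero atoms, which (see Section~\ref{sec:atomic}) may be taken pairwise disjoint; call them $\Omega_i$ and $\Omega_j$ with $i\neq j$ in $I$.

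First I would evaluate $R_e[T]$ on the indicators of these atoms. Plugging $\ind{\Omega_i}$ into~\eqref{eq:R=maxRi} and using that the atoms are pairwise disjoint, so that $\ind{\Omega_i}\ind{\Omega_k}=\zero$ and hence $R_e[T_k](\ind{\Omega_i}\ind{\Omega_k})=R_e[T_k](\zero)=0$ for every $k\neq i$, only the index $k=i$ survives and one finds
\[
  R_e[T](\ind{\Omega_i})=R_e[T_i](\ind{\Omega_i})=R_0[T_i]>0,
  \qquad
  R_e[T](\ind{\Omega_j})=R_0[T_j]>0,
\]
the strict positivity being part of the definition of non-zero atoms, see Remark~\ref{rem:Frob0}.

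The key step is the evaluation at the midpoint $\tfrac{1}{2}(\ind{\Omega_i}+\ind{\Omega_j})=\tfrac{1}{2}\ind{\Omega_i\cup\Omega_j}$. Applying~\eqref{eq:R=maxRi} once more, only the indices $i$ and $j$ contribute, and the homogeneity of Proposition~\ref{prop:R_e}~\ref{prop:normal} (with $\lambda=\tfrac12$) gives
\[
  R_e[T]\!\left(\tfrac{1}{2}\ind{\Omega_i\cup\Omega_j}\right)
  =\max\!\left(\tfrac{1}{2}R_0[T_i],\ \tfrac{1}{2}R_0[T_j]\right)
  =\tfrac{1}{2}\max\big(R_0[T_i],R_0[T_j]\big).
\]
Concavity of $R_e[T]$ applied to the same midpoint, on the other hand, forces
\[
  R_e[T]\!\left(\tfrac{1}{2}\ind{\Omega_i\cup\Omega_j}\right)
  \geq \tfrac{1}{2}R_e[T](\ind{\Omega_i})+\tfrac{1}{2}R_e[T](\ind{\Omega_j})
  =\tfrac{1}{2}\big(R_0[T_i]+R_0[T_j]\big).
\]
Combining the last two displays gives $\max(R_0[T_i],R_0[T_j])\geq R_0[T_i]+R_0[T_j]$, which is impossible since both $R_0[T_i]$ and $R_0[T_j]$ are strictly positive. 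This contradiction shows $\sharp I\leq 1$, hence $\sharp I=1$ and $T$ is monatomic.

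I do not expect any genuine analytic obstacle here: the argument is purely a consequence of the $\max$ structure of~\eqref{eq:R=maxRi} combined with homogeneity, and the only delicate point is the choice of the right test functions, namely the indicators of two disjoint atoms, which make concavity (demanding the midpoint value to be at least the average) clash with the reduction formula (which makes it only the maximum). The minor bookkeeping is to record that the atoms may be chosen pairwise disjoint and that each restricted operator $T_i$ has strictly positive spectral radius.
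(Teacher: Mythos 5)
Your proof is correct and follows essentially the same route as the paper: both arguments test concavity against the $\max$ structure of Lemma~\ref{lem:Rei} using indicators of two distinct non-zero atoms. The only cosmetic difference is that the paper rescales the second indicator by $R_0[T_1]/R_0[T_2]$ so the segment becomes $\theta\mapsto R_0[T_1]\max(\theta,1-\theta)$, whereas you evaluate directly at the unscaled midpoint; both yield the same contradiction.
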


\begin{proof}
  Since   $R_0[T]$   is   positive,   we  deduce   that   $T$   is   not
  quasi-nilpotent. Suppose  that $T$ is  not monatomic. This  means that
  the   cardinal   of  the   at   most   countable   set  $I$   in   the
  decomposition~\eqref{eq:R=maxRi} is  at least $2$. So  let $T_{1}$ and
  $T_{2}$ be  two quasi-irreducible components  of $T$, where  we assume
  that $\{1,2 \}  \subset I$.  Let $\Omega_{1}$  and $\Omega_{2}$ denote
  their respective  non-zero atoms. Without  loss of generality,  we can
  suppose that  $R_0[T_{2}] \geq R_0[T_{1}]>0$. Consider  the strategies
  $\eta_1=\ind{\Omega_{1}}$                                          and
  $\eta_2=R_0[T_{1}] \, R_0[T_{2}]^{-1}\,  \ind{\Omega_{2}}$ (which both
  belong  to  $\Delta$).   For  $\theta\in  [0,  1]$,   we  deduce  from
  \eqref{eq:R=maxRi}  and the  homogeneity of  the spectral  radius that
  $R_e[T](\theta  \eta_1  +  (1-\theta) \eta_2)=  R_e[T_1]  \max(\theta,
  1-\theta)$.  Since  $\theta\mapsto   \max(\theta,  1-\theta)$  is  not
  concave, we deduce that $R_e[T]$ is not concave on $\Delta$.
\end{proof}

\printbibliography
\end{document}